\numberwithin{equation}{section}
\newtheorem{theorem}{Theorem}[section]
\newtheorem{lemma}[theorem]{Lemma}
\newtheorem{corollary}[theorem]{Corollary}
\newtheorem{definition}[theorem]{Definition}
\newtheorem{remark}[theorem]{Remark}
\newcommand{\B}{\mathbf{B}}
\newcommand{\Ev}{\mathbf{E}}
\newcommand{\E}{\mathbf{E}}
\newcommand{\G}{\mathbf{G}}
\newcommand{\N}{\mathbf{N}}
\newcommand{\Z}{\mathbf{Z}}
\newcommand{\Pv}{\mathbf{P}}
\newcommand{\CG}{G}
\newcommand{\CX}{\mathcal {X}}
\newcommand{\Hh}{H}
\newcommand*{\pid}{\pi^{\diamond}}
\newcommand*{\td}{t^{\diamond}}
\newcommand*{\tstop}{t_{\text{stop}}}
\newcommand*{\la}{\lambda}
\newcommand*{\ve}{\varepsilon}
\newcommand*{\al}{\alpha}
\newcommand*{\Rb}{\mathbb R}
\newcommand*{\un}[1]{\underline{#1}}
\newcommand*{\be}{\begin{equation}}
\newcommand*{\ee}{\end{equation}}
\newcommand*{\ba}{\begin{aligned}}
\newcommand*{\ea}{\end{aligned}}
\newcommand*{\barr}{\begin{array}{c}}
\newcommand*{\earr}{\end{array}}
\newcommand*{\Vv}{{\text{\bf Var}}}
\newcommand*{\fl}[1]{\lfloor{#1}\rfloor}
\newcommand*{\ind}{\mathbf{1}}
\newcommand*{\trel}{t_{{\rm rel}}}
\newcommand*{\tu}{t_{{\rm u}}}
\newcommand*{\tcov}{t_{{\rm cov}}}
\newcommand*{\thit}{t_{{\rm hit}}}
\newcommand*{\tmix}{t_{{\rm mix}}}
\newcommand*{\TV}{{\rm TV}}
\newcommand*{\tqu}[1]{t^{\text{quant}}_{#1}}
\begin{document}
\begin{frontmatter}
\title{Mixing and relaxation time for Random Walk on Wreath Product Graphs}
\runtitle{Mixing time for walks on wreath products}

\begin{aug}

\runauthor{J\'ulia Komj\'athy and Yuval Peres}

\author{J\'ulia Komj\'athy \thanks{J. Komj\'athy was supported by the grant
 KTIA-OTKA  $\#$ CNK 77778, funded by the Hungarian National Development Agency (NFÜ)  from a
source provided by KTIA.}}
\address{Budapest University of Technology and Economics,
\tt{komyju@math.bme.hu}}

\author{Yuval Peres}
\address{Microsoft Research
\tt{peres@microsoft.com}}

\affiliation{Budapest University of Technology and Economics and Microsoft Research}

\end{aug}
\date{\today}

\begin{abstract}
Suppose that $\CG$ and $\Hh$ are finite, connected graphs, $\CG$ regular, $X$ is a lazy random walk on $\CG$ and $Z$ is a reversible ergodic Markov chain on $\Hh$.  The generalized lamplighter chain $X^\diamond$ associated with $X$ and $Z$ is the random walk on the wreath product $\Hh \wr \CG$, the graph whose vertices consist of pairs $(\un f,x)$ where $\un f=\left(f_v\right)_{v\in V(\CG)}$ is a labeling of the vertices of $\CG$ by elements of $\Hh$ and $x$ is a vertex in $\CG$. In each step, $X^\diamond$ moves from a configuration $(\un f,x)$ by updating $x$ to $y$ using the transition rule of $X$ and then independently updating both $ f_x$ and $f_y$ according to the transition probabilities on $\Hh$; $f_z$ for $z \neq x,y$ remains unchanged.  We estimate the mixing time of $X^\diamond$ in terms of the parameters of $\Hh$ and $\CG$. Further, we show that the relaxation time of $X^\diamond$ is the same order as the maximal expected hitting time of $G$ plus $|G|$ times the relaxation time of the chain on $H$.
\end{abstract}

\begin{keyword}[class=AMS]
\kwd[Primary ]{60J10} 
\kwd{60D05}
\kwd{37A25}
\end{keyword}

\begin{keyword}
\kwd{Random walk, generalized lamplighter walk, wreath product, mixing time, relaxation time.}
\end{keyword}

\end{frontmatter}

\maketitle

\section{Introduction}

Suppose that $\CG$ and $\Hh$ are finite connected graphs with vertices $V(\CG)$, $V(\Hh)$ and edges $E(\CG)$, $E(\Hh)$, respectively.  We refer to $\CG$ as the base and $\Hh$ as the lamp graph, respectively. Let $\CX(\CG) = \{\un f \colon V(\CG) \to \Hh\}$ be the set of markings of $V(\CG)$ by elements of $\Hh$.
 The wreath product $\Hh \wr \CG$ is the graph whose vertices are pairs $(\un f,x)$
where $\un f=\left(f_v\right)_{v\in V(\CG)} \in \CX(\CG)$ and $x \in V(\CG)$.
There is an edge between $(\un f,x)$ and $(\un g,y)$ if and only if $(x,y) \in E(\CG)$, $\left(f_x,g_x\right), \left(f_y,g_y\right) \in E(\Hh)$ and $f_z = g_z$ for all $z \notin \{x,y\}$.  Suppose that $P$ and $Q$ are transition matrices for Markov chains on $\CG$ and on $\Hh$, respectively.  The generalized lamplighter walk $X^\diamond$ (with respect to the transition matrices $P$ and $Q$) is the Markov chain on $\Hh\wr \CG$ which moves from a configuration $(\un f,x)$ by
\begin{enumerate}
\item picking $y$ adjacent to $x$ in $\CG$ according to $P$, then
\item updating each of the values of $f_x$ and $f_y$ independently according to $Q$ on $\Hh$.
\end{enumerate}
The state of lamps $f_z$ at all other vertices $z\in \CG$ remain fixed.  It is easy to see that if $P$ and  $Q$ are irreducible, aperiodic and reversible with stationary distribution $\pi_\CG$ and $\pi_\Hh$, respectively, then the unique stationary distribution of $X^\diamond$ is the product measure
\[ \pid\big((\un f,x)\big)= \pi_\CG(x) \cdot\!\!\!\prod_{v\in V(\CG)} \pi_\Hh\left(f_v\right),\]
and $X^\diamond$ is itself reversible.  In this article, we will be concerned with the special case that $P$ is the transition matrix for the \emph{lazy random walk} on $\CG$.  In particular, $P$ is given by
\begin{equation}
\label{eq::lazy_rw_definition}
P(x,y) := \begin{cases} \frac{1}{2} \text{ if } x = y,\\ \frac{1}{2d(x)} \text{ if } \{x,y\} \in E(\CG), \end{cases}
\end{equation}
for $x,y \in V(\CG)$ and where $d(x)$ is the degree of $x$.
We further assume that the transition matrix $Q$ on $\Hh$ is irreducible and aperiodic. This and the assumption \eqref{eq::lazy_rw_definition} guarantees that we avoid issues of periodicity.
\begin{figure}[ht]
\includegraphics[height=6cm]{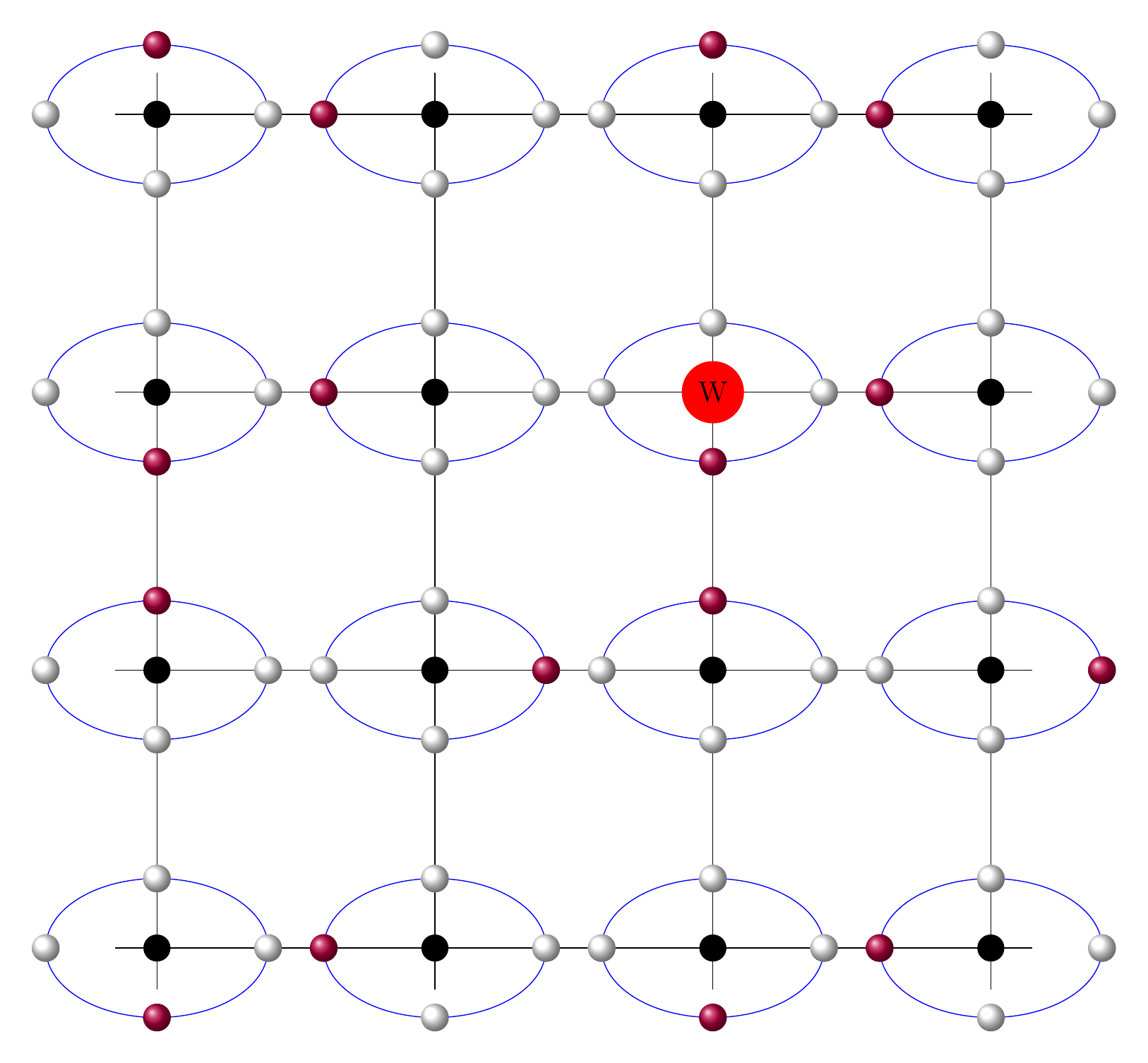}
\caption{A typical state of the generalized lamplighter walk.
Here $\Hh=\Z_4$ and $\CG=\Z_4^2$, the red bullets on each copy of $\Hh$ represents the state of the lamps over each vertex $v \in \CG$ and the walker is drawn as a red $W$ bullet.}
\end{figure}

\subsection{Main Results}

In order to state our general result, we first need to review some basic terminology from the theory of Markov chains.
Let $P$ be the transition kernel for a lazy random walk on a finite, connected graph $\CG$ with stationary distribution $\pi$.

The $\ve$-\emph{mixing time} of $P$ on $\CG$ in total variation distance is given by
\begin{equation}
\label{eqn::tmix_definition}
 \tmix(\CG,\ve) := \min\left\{ t \geq 0: \max_{x\in V(\CG)}\frac12 \sum_{y} \left|P^t(x,y)-\pi(y)\right| \leq \ve \right\}.
\end{equation}
Throughout, we set $\tmix(\CG) := \tmix(\CG, \tfrac{1}{4})$.

  The \emph{relaxation time} of a reversible Markov Chain with transition matrix $P$ is
\begin{equation}
\label{eqn::trel_definition}
 \trel(\CG) := \frac{1}{1-\lambda_2}
\end{equation}
where $\lambda_2$ is the second largest eigenvalue of $P$.

The \emph{maximal hitting time} of $P$ is
\begin{equation}
\label{eqn::thit_definition}
 \thit(\CG) := \max_{x,y\in V(\CG)} \E_x[\tau_y],
\end{equation}
where $\tau_y$ denotes the first time $t$ that $X(t) = y$ and $\E_x$ stands for the expectation under the law in which $X(0) = x$.
The random cover time $\tau_{\text{cov}}$ is the first time when all vertices have been visited by the walker $X$, and the cover time $\tcov(G)$ is
\begin{equation}
\label{eqn::tcov_definition}
 \tcov(\CG) := \max_{x\in V(\CG)} \E_x[\tau_{\text{cov}}].
\end{equation}
The next needed concept is that of strong stationary times.
\begin{definition}\label{def::sst}
A randomized stopping time $\tau$ is called a strong stationary time for the Markov chain $X_t$ on $G$ if
\[ \Pv_{x}\left[ X_{\tau}=y, \tau = t \right] = \pi(y) \Pv_x[\tau=t], \]
that is, the position of the walk when it stops at $\tau$ is independent of the value of $\tau$.
\end{definition}
\noindent The adjective randomized means that the stopping time can depend on some extra randomness, not just purely the trajectories of the Markov chain, for a precise definition see \cite[Section 6.2.2]{LPW08}.
\begin{definition}\label{def::halting_state} A state $h(x)\in V(G)$ is called a  halting state for a stopping time $\tau$ and initial state $x$ if $\{X_t=h(x)\}$ implies $\{\tau\le t\}$.
\end{definition}
Our main results are summarized in the following theorems:

\begin{theorem}
\label{thm::main_relax}
Let us assume that $\CG$ and $\Hh$ are connected graphs with $\CG$ regular and the Markov chain on $\Hh$ is ergodic and reversible.
Then there exist universal constants $c_1,C_1$ such that the relaxation time of the  generalized lamplighter walk on
$\Hh\wr \CG$ satisfies
\begin{align}
 c_1  \leq \frac{\trel(\Hh\wr \CG)}{  \thit(\CG) +|\CG|\trel(\Hh)}  \leq C_1, \label{eqn::trel_main}
\end{align}
\end{theorem}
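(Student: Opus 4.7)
The plan is to bound $\trel(\Hh\wr\CG)$ via the Rayleigh-quotient characterization
\[
\trel(\Hh\wr\CG)\;=\;\sup_{F:\ \E_{\pid}[F]=0}\ \frac{\Vv_{\pid}(F)}{\CE^\diamond(F,F)},
\]
where $\CE^\diamond$ is the Dirichlet form of $X^\diamond$. For the lower bound I will exhibit two explicit test functions, one per summand in the inequality; for the upper bound I will expand $F$ in the eigenbasis of $Q$ on the lamp coordinates and control each Fourier mode separately. Throughout, fix an eigenfunction $\phi\colon V(\Hh)\to\Rb$ of $Q$ with eigenvalue $\la_{\Hh,2}=1-1/\trel(\Hh)$, normalized so that $\pi_\Hh(\phi)=0$ and $\|\phi\|_{L^2(\pi_\Hh)}=1$.

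For the $|\CG|\trel(\Hh)$ term I would take $F_1(\un f,x):=\sum_{v\in V(\CG)}\phi(f_v)$. Independence of the $f_v$'s under $\pid$ gives $\Vv(F_1)=|\CG|$; since one step of $X^\diamond$ updates only $f_x,f_y$ and the two updates are conditionally independent, expanding the squared increment with $\E_{\pi_\Hh}[(\phi(f')-\phi(f))^2]=2(1-\la_{\Hh,2})$ yields $\CE^\diamond(F_1,F_1)=\tfrac32(1-\la_{\Hh,2})$, so $\trel(\Hh\wr\CG)\ge\tfrac23|\CG|\trel(\Hh)$. For the $\thit(\CG)$ term, choose $v^*\in V(\CG)$ maximizing $\E_{\pi_\CG}[\tau_v]$ and set $\psi(x):=\E_x[\tau_{v^*}]$, so that $\psi(v^*)=0$ and $(I-P)\psi\equiv\ind_{V(\CG)\setminus\{v^*\}}$. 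Define $F_2(\un f,x):=\phi(f_{v^*})\psi(x)$; using $\pi_\Hh(\phi)=0$ and the product structure of $\pid$ one gets $\Vv(F_2)=\|\psi\|_{L^2(\pi_\CG)}^{2}$. The crucial observation is that a step updates $f_{v^*}$ only when $v^*\in\{x,y\}$, but then $\psi(x)\psi(y)=0$, which kills the cross-term in the expansion of $(F_2(X^\diamond_1)-F_2(X^\diamond_0))^2$ and gives $\E[(\Delta F_2)^2\mid x,y]=(\psi(y)-\psi(x))^2$ in every case, so
\[
\CE^\diamond(F_2,F_2)=\CE_\CG(\psi,\psi)=\langle\psi,(I-P)\psi\rangle_{\pi_\CG}=\E_{\pi_\CG}[\tau_{v^*}].
\]
Jensen gives $\|\psi\|_{L^2(\pi_\CG)}^{2}\ge(\E_{\pi_\CG}[\tau_{v^*}])^{2}$, hence $\trel(\Hh\wr\CG)\ge\E_{\pi_\CG}[\tau_{v^*}]$; since for reversible chains $\max_v\E_{\pi_\CG}[\tau_v]\asymp\thit(\CG)$ (Aldous--Fill), this completes the lower bound.

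For the upper bound I would expand $F(\un f,x)=\sum_{\un\alpha}c_{\un\alpha}(x)\prod_v\phi_{\alpha_v}(f_v)$ in the orthonormal eigenbasis $\{\phi_0\equiv 1,\phi_1,\ldots\}$ of $Q$ with eigenvalues $1=\mu_0\ge\mu_1\ge\cdots$. Conditioning the lamp updates on the walker's trajectory diagonalizes the one-step operator in the $\un\alpha$-index, producing
\[
\CE^\diamond(F,F)=\sum_{\un\alpha}\Bigl(\|c_{\un\alpha}\|_{L^2(\pi_\CG)}^{2}-\E_{\pi_\CG}\bigl[c_{\un\alpha}(X_0)c_{\un\alpha}(X_1)\prod_{v\in\{X_0,X_1\}}\mu_{\alpha_v}\bigr]\Bigr),
\]
with an analogous splitting of $\Vv_{\pid}(F)$. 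The $\un\alpha=\un 0$ mode is the pure base-chain contribution, controlled by $\trel(\CG)\le C\thit(\CG)$. For $\un\alpha\ne\un 0$ the lamp factor $1-\prod_{v\in\{X_0,X_1\}}\mu_{\alpha_v}$ is at least $1-\mu_1=1/\trel(\Hh)$ whenever the walker touches a vertex $v$ with $\alpha_v\ne 0$, and by regularity of $\CG$ the stationary rate of such events is $\asymp|S_\alpha|/|\CG|$ with $S_\alpha:=\{v:\alpha_v\ne 0\}$, yielding the $|\CG|\trel(\Hh)$ contribution. The main obstacle I anticipate is handling mixed modes cleanly: the spatial variance of $c_{\un\alpha}$ must be controlled simultaneously against the Dirichlet energy on $\CG$ and the lamp-decay factor, and bringing in $\thit(\CG)$ (rather than just $\trel(\CG)$) for modes supported on few vertices reuses the Dirichlet-eigenvalue/hitting-time identity exploited in the lower bound. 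I would finesse this by splitting each $c_{\un\alpha}$ into its $\pi_\CG$-mean and a mean-zero fluctuation and applying the relevant bound to each piece.
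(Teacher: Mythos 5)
Your lower bound is correct, and for the $\thit(\CG)$ term it takes a genuinely different route from the paper. The paper uses the same first test function $\sum_v\psi(f_v)$, but it evaluates the \emph{$t$-step} Dirichlet form at $t=|\CG|\trel(\Hh)$ (and, for the hitting-time term, uses the test function $\psi(f_w)$ alone at $t=\thit(\CG)/4$, via $\CE_t=1-\E_{\pid}[\la_\Hh^{L_w(t)}]$ and $\Pv_{\pi_\CG}[\tau_w>\thit/4]\ge 1/4$), then converts the resulting bound on the $t$-step gap into a bound on $\trel$. Your one-step computation with $F_2(\un f,x)=\phi(f_{v^*})\psi(x)$ is a clean alternative: the cross term dies because $\psi(v^*)=0$, a freshly refreshed lamp still satisfies $\E_{\pi_\Hh}[\phi^2]=1$, and the identity $\langle\psi,(I-P)\psi\rangle_{\pi_\CG}=\E_{\pi_\CG}[\tau_{v^*}]$ holds (the failure of $(I-P)\psi=\ind$ at $v^*$ itself is harmless since $\psi(v^*)=0$ kills that term); together with $\thit\le 2\max_v\E_{\pi_\CG}[\tau_v]$ this gives $\trel(\Hh\wr\CG)\ge\thit(\CG)/2$ directly. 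One small correction: under the paper's convention a lazy step $y=x$ refreshes the lamp at $x$ twice, so $\CE^\diamond(F_1,F_1)=\tfrac12(1-\la_\Hh)(3+\la_\Hh)$ rather than $\tfrac32(1-\la_\Hh)$; this only affects the constant.

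The upper bound is where the proposal has a genuine gap. The paper proves it by an entirely different mechanism: it constructs a strong stationary time $\tau_2^\diamond$ for $\Hh\wr\CG$ from separation-optimal times on $\Hh$ and $\CG$ (Lemmas \ref{lem::opt_sst_lamplighter} and \ref{lem::opt_sst_lamplighter_2}), bounds its tail by combining a cover-time estimate, a large-deviation bound for the excursion lengths (to guarantee every vertex has local time $\ge t/2|\CG|$), and the decay $\la_\Hh^{t/2|\CG|}$ of each lamp's stationary time, and then converts the tail bound into an eigenvalue bound via $|\la_2|^t\le 2d(t)\le 2s(t)$ and Lemma \ref{lem::dtlimlambda}. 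Your route must instead establish, for \emph{every} mode $\un\alpha\ne\un 0$ and every coefficient $c=c_{\un\alpha}$, the one-step inequality
\[
\|c\|_{L^2(\pi_\CG)}^2-\E_{\pi_\CG}\Bigl[c(X_0)c(X_1)\!\!\prod_{v\in\{X_0,X_1\}\cap S_\alpha}\!\!\mu_{\alpha_v}\Bigr]\;\ge\;\frac{c_0\,\|c\|_{L^2(\pi_\CG)}^2}{\thit(\CG)+|\CG|\trel(\Hh)},
\]
and this is exactly what you have not done: the phrase ``the stationary rate of such events is $\asymp|S_\alpha|/|\CG|$'' is a heuristic, not a bound on the one-step form, and you yourself flag the mixed modes as the obstacle. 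The dangerous case is $|S_\alpha|$ small with $c$ having both a large mean and a slowly varying fluctuation: the mean part only feels the lamp decay at rate $|S_\alpha|/|\CG|$ per step, the fluctuation part must be charged to the spatial Dirichlet energy (and then to $\trel(\CG)\le C\thit(\CG)$), and the cross terms $\E[\tilde c(X_i)\,\bar c\,M]$ with $M=\prod_{v\in\{X_0,X_1\}}\mu_{\alpha_v}$ neither vanish nor are obviously negligible, since $M$ is correlated with $(X_0,X_1)$ and the $\mu_{\alpha_v}$ may be negative ($Q$ is not assumed lazy in this theorem). Until that decomposition is carried out with explicit error control, the upper half of \eqref{eqn::trel_main} is not proved.
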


\begin{theorem}\label{thm::main_mixing}
Assume that the conditions of Theorem \ref{thm::main_relax} hold and further assume that the chain with transition matrix $Q$ on $H$ is lazy, i.e. $Q(x,x)\ge \frac12 \ \forall x \in H$.
Then there exist universal constants $c_2,C_2$ such that the mixing time of the  generalized lamplighter walk on $\Hh\wr \CG$ satisfies
\be\label{eqn::tmix_main}\begin{aligned}
 &c_2 \left( \tcov(G)+\trel(H) |G| \log |G| + |G|\tmix(H)\right) \leq \tmix(\Hh\wr \CG), \\
 &\tmix(H\wr G) \leq C_2 \left( \tcov(\CG) +|\CG|\tmix(\Hh,\frac{1}{|\CG|})\right).
\end{aligned}
\ee
If further the Markov chain is such that
\begin{description}
\item[(A)] \label{ass::sst_exist} There is a strong stationary time $\tau_H$ for the Markov chain on $H$ which possesses a halting state $h(x)$ for every initial starting point $x\in H$,
\end{description}
then the upper bound of \ref{eqn::tmix_main} is sharp.
\end{theorem}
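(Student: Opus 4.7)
The plan is to prove the theorem in three parts: the upper bound, the three-term lower bound, and the sharpness of the upper bound under assumption (A). I expect the third term of the lower bound to be the main technical obstacle.

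For the upper bound, set $T = C_2\bigl(\tcov(\CG) + |\CG|\tmix(\Hh, 1/(4|\CG|))\bigr)$ for a large enough constant $C_2$. Conditional on the trajectory of the walker $X$ on $\CG$, the lamp chains at different vertices $v$ evolve as independent copies of the chain with transition matrix $Q$, each run for as many steps as it has been updated (which is at least the local time at $v$). Because $\CG$ is regular, the expected local time at $v$ up to time $T$ is $T/|\CG|$, and a concentration estimate for such cover-type quantities (repeated cover epochs of length $O(\tcov(\CG))$) shows that with probability at least $7/8$ every vertex has been visited at least $\tmix(\Hh, 1/(4|\CG|))$ times by time $T$. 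A union bound over the $|\CG|$ lamps then bounds the conditional total variation distance of the lamp configuration from $\prod_v \pi_\Hh$ by $1/4$. Combined with the walker's own mixing (dominated by $\tcov(\CG)$), this gives the required upper bound.

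For the lower bound the three terms are handled separately. The contribution $\tcov(\CG)$ is standard: if $T < c\,\tcov(\CG)$, then with positive probability some vertex $v$ has not been visited, so the marginal of $f_v$ is a point mass at its initial value and hence far from $\pi_\Hh$. The contribution $|\CG|\tmix(\Hh)$ follows in the same spirit from considering a typical vertex, whose local time is $O(T/|\CG|) \ll \tmix(\Hh)$. For the most delicate term $\trel(\Hh)|\CG|\log|\CG|$, I would use a spectral test. Let $\phi$ be an eigenfunction of $Q$ for its second-largest eigenvalue $\lambda_2$, normalized so that $\pi_\Hh(\phi)=0$ and $\pi_\Hh(\phi^2)=1$, and define $\Phi(\un f) = \sum_v \phi(f_v)$, so that $\Vv_{\pid}[\Phi] = |\CG|$. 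Starting from a constant lamp configuration $f_v \equiv x_0$ with $|\phi(x_0)|$ bounded away from $0$, the eigenfunction identity gives
\[
\E\bigl[\Phi(\un f(T)) \,\bigm|\, X\bigr] \;=\; \phi(x_0) \sum_{v\in V(\CG)} \lambda_2^{N_v(T)},
\]
where $N_v(T)$ counts updates to $f_v$. Jensen's inequality combined with concentration of local times gives $\E[\Phi(\un f(T))] \gtrsim |\CG|\,\lambda_2^{O(T/|\CG|)}$. For $T < c\,\trel(\Hh)|\CG|\log|\CG|$ with $c$ small enough, this dominates $\sqrt{\Vv_{\pid}[\Phi]} = \sqrt{|\CG|}$, and Chebyshev's inequality distinguishes the law of $X^\diamond(T)$ from $\pid$.

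For the sharpness of the upper bound under (A), the objective is to upgrade the three-term lower bound into the form $\tcov(\CG) + |\CG|\tmix(\Hh, 1/|\CG|)$. For reversible chains one always has the inequality $\tmix(\Hh,\varepsilon) \leq \tmix(\Hh) + \trel(\Hh)\log(1/\varepsilon)$. Under (A), the strong stationary time with a halting state implies that the separation distance satisfies $d_{\text{sep}}(\Hh, t) = \Pv[\tau_\Hh > t]$ and enjoys a matching lower bound of order $\lambda_2^{\,t - \tmix(\Hh)}$, so that $\tmix(\Hh, 1/|\CG|) \asymp \tmix(\Hh) + \trel(\Hh)\log|\CG|$. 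Substituting into the three-term lower bound proves $\tmix(\Hh\wr\CG) \gtrsim \tcov(\CG) + |\CG|\tmix(\Hh, 1/|\CG|)$, matching the upper bound. The hardest step is the third lower-bound term, which hinges on a careful second-moment calculation using the eigenfunction of $Q$ together with concentration of local times on $\CG$; the remaining ingredients reduce to standard tools from the theory of mixing times, cover times, and strong stationary times with halting states.
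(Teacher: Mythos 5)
Your overall architecture (upper bound via local times of the walker, three separate lower bounds, a spectral test for the $\trel(\Hh)|\CG|\log|\CG|$ term) matches the paper's, and your treatment of the spectral term --- second eigenfunction of $Q$, $\Phi(\un f)=\sum_v\phi(f_v)$, Jensen applied to $\sum_v L_v(t)=2t$ (no local-time concentration is actually needed there), then Chebyshev --- is essentially the paper's argument. However, three steps have genuine gaps. First, in the upper bound, the claim that by time $C(\tcov(\CG)+|\CG|m)$ every vertex has been visited $m$ times (with $m=\tmix(\Hh,1/(4|\CG|))$) does not follow from ``repeated cover epochs'': forcing $m$ visits to every vertex by covering $m$ times costs time of order $m\cdot\tcov(\CG)$, which can vastly exceed $\tcov(\CG)+|\CG|m$ when $\tcov(\CG)\gg|\CG|$. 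The paper obtains uniform control of the local times from the blanket-time theorem of Ding, Lee and Peres, $\B_2\le C\,\tcov(\CG)$: after time $8\B_2$ all local times are within a factor $2$ of their average, so an additional $|\CG|m$ steps guarantee local time at least $m/2$ everywhere. Without this (or an equivalent nontrivial input) your union bound over the lamps is unsupported. Second, the lower bound $c\,|\CG|\tmix(\Hh)$ does not follow from ``a typical vertex has local time $\ll\tmix(\Hh)$'': the local time is a random stopping rule, and a chain run for a random number of steps of small mean can perfectly well be exactly stationary --- this is precisely the gap between mixing times and optimal stopping rules. The paper restricts a mean-optimal stationary stopping rule for $\Hh\wr\CG$ to a single lamp coordinate to get $\tstop(\Hh\wr\CG)\ge\tfrac12|\CG|\tstop(\Hh)$, and then invokes the Lov\'asz--Winkler/Peres--Sousi equivalence $\tstop\asymp\tmix$ for \emph{lazy} reversible chains; this is exactly where the laziness of $Q$ enters, and no elementary ``typical vertex'' argument replaces it.

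Third, for sharpness under (A) you assert $\tmix(\Hh,1/|\CG|)\asymp\tmix(\Hh)+\trel(\Hh)\log|\CG|$. Only the $\gtrsim$ direction is general; the $\lesssim$ direction is exactly the extra hypothesis the paper isolates in a separate remark as sufficient for matching bounds, and it is not a consequence of (A): a halting state makes the separation distance on $\Hh$ \emph{equal} to $\Pv[\tau_\Hh>t]$, but it does not control the tail of $\tau_\Hh$ in terms of $\trel(\Hh)$. The paper instead proves the lower bound $c\,|\CG|\tmix(\Hh,1/|\CG|)$ directly: with halting states the separation distance on $\Hh\wr\CG$ is bounded \emph{below} by $\Pv[\tau^\diamond>t]$ (this is the only place (A) is really used); since $\sum_vL_v(t)=2t$, at time $t=\tfrac14|\CG|t_\Hh^\ell$ at least half the vertices have local time below the $(|\CG|^{-1/2}/2)$-quantile $t_\Hh^\ell$ of $\tau_\Hh$, a binomial estimate then shows some lamp has not reached its strong stationary time except with probability $e^{-\sqrt{|\CG|}/4}$, and finally the quantile is converted into $\tmix(\Hh,1/|\CG|)$ via submultiplicativity of the total variation distance. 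You should replace your third step by this direct argument (or supply a proof of the asserted equivalence, which I do not believe follows from (A) alone). A smaller point in the same spirit: your cover-time lower bound assumes that at time $c\,\tcov(\CG)$ some vertex is unvisited with decent probability; the clean way, as in the paper, is the reverse implication that by time $8\tmix(\Hh\wr\CG)$ the walk has covered with probability $3/4$, whence $\tcov(\CG)\le 11\,\tmix(\Hh\wr\CG)$.
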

\begin{remark}
The laziness assumption on the transition matrix $Q$ on $H$ is only used to get the term $c_2 |G| \tmix(H)$ in \eqref{eqn::tmix_main}. All the other bounds hold without the laziness assumption.
\end{remark}

\begin{remark}
If the Markov Chain on $H$ is such that \[ \tmix(H, \ve) \le \tmix(H, 1/4) + \trel(H) \log \ve,\] then the upper bound matches the lower bound. This holds for many natural chains such as lazy random walk on hypercube $\Z_2^d$, tori $\Z_n^d$, some walks on the permutation group $S_n$ (the  random transpositions or random adjacent transpositions shuffle, and the top-to-random shuffle, for instance).
\end{remark}

\begin{remark}
Many examples where Assumption ({\bf A}) holds are given in the thesis of Pak \cite{PAK97}, including the cycle $\Z_n$, the hypercube $\Z_2^d$ and more generally tori $\Z_n^d, n, d \in \N$ and dihedral groups $\Z_2\ltimes \Z_n, n\in \N$ are also obtained by the construction of strong stationary times with halting states on direct and semidirect product of groups. Further, Pak constructs strong stationary times possessing halting states for the random walk on $k$-sets of $n$-sets, i.e. on the group $S_n/(S_k\times S_{n-k})$, and on subsets of $n\times n$ matrices over the full linear group, i.e. on $GL(n, \mathbb F_q)/(GL(k, \mathbb F_q)\times GL(n-k, \mathbb F_q))$.
\end{remark}

\subsection{Previous Work}
The mixing time of $\Z_2 \wr \CG$ was first studied by H\"aggstr\"om and Jonasson in \cite{HJ97} in the case of $\CG$ being the complete graph $K_n$ and the one-dimensional cycle $\Z_n$. Generalizing their results, Peres and Revelle \cite[Theorem 1.2, 1.3]{PR04} proved that there exists constants $c_i, C_i$ depending on $\ve$ such that for any transitive graph $\CG$,
\[ \ba c_1\thit(\CG)  &\leq \trel(\Z_2 \wr \CG) \leq C_1  \thit (\CG),\\
c_2 \tcov(\CG)  &\leq \tmix(\Z_2 \wr \CG, \ve) \leq C_2  \tcov (\CG).
\ea \]
The vertex transitivity condition was dropped in \cite[Theorem 19.1, 19.2]{LPW08}.
These bounds match with Theorems \ref{thm::main_relax} and \ref{thm::main_mixing} since $\Hh_n = \Z_2$ implies that the terms not containing $\Hh_n$ in the denominator of \eqref{eqn::trel_main} and in the bounds in \eqref{eqn::tmix_main} dominate.

In \cite{MP11}, it is shown that $\tmix(\Z_2 \wr \CG_n) \sim \tfrac{1}{2} \tcov(\CG_n)$ whenever $(\CG_n)$ is a sequence of graphs satisfying some uniform local transience assumptions, including $G_n=\Z_n^d$ with $d\geq 3$ fixed.

Moving towards larger lamp spaces, if the base is the complete graph $K_n$ and $|\Hh_n|= o(n)$ one can determine the order of mixing time from \cite[Theorem 20.7]{LPW08}, since in this case the lamplighter chain is a product chain on $\prod_{i=1}^{n} \Hh_n$. Levi \cite{NL06} investigated random walks on wreath products when $\Hh\neq \Z_2$. In particular, he determined the order of the mixing time of $K_{n^{\la}}\wr K_n$, $0\le \la \le 1$, and he also had upper and lower bounds for the case $H_d\wr \Z_n$, i.e. $\Hh$ is the $d$-dimensional hypercube and the base is a cycle of length $n$, however, the bounds failed to match for general $d$ and $n$. Further, Fill and Schoolfield \cite{FS01} investigated the total variation and $l_2$ mixing time of $K_n\wr S_n$, where the base graph is the Cayley graph of the symmetric group $S_n$ with transpositions chosen as the generator set, and the stationary distribution on $K_n$ is not necessarily uniform. 

 The mixing time of $\Hh_n= \Z_2$ is closely related to the cover time of the base graph, and thus it helps understanding the geometric structure of the last visited points by random walk \cite{DPRZ_LATE06, DPRZ_COV04, BH91, MP11}.
 Further, larger lamp graphs give more information on the local time structure of the base graph $\CG$. This relates our work to the literature on blanket time (when all the local times of vertices are within a constant factor of each other) \cite{BDNP11, DLP11, WZ96}.

\subsection{Outline}
The remainder of this article is structured as follows. In Section \ref{sec::prelim} we state a few necessary theorems and lemmas about the Dirichlet form, strong stationary times, different notions of distances and their relations. In Lemmas \ref{lem::opt_sst_lamplighter} and \ref{lem::opt_sst_lamplighter_2} we construct a crucial stopping time $\tau^\diamond$ and a strong stationary time $\tau^\diamond_2$ on $\Hh \wr \CG$ which we will use several times throughout the proofs later. Then we prove the main theorem about the relaxation time in Section \ref{sec::trel}, and the mixing time bounds in Section \ref{sec::tmix}.

\section{Notations}
Throughout the paper, objects related to the base or the lamp graph will be indexed by $G$ and $H$, respectively, and $\diamond$ always refers to an object related to the whole $H\wr G$. Unless misleading, $G$ and $H$ refers also to the vertex set of the graphs, i.e. $v\in G$ means $v\in V(G).$ $\Pv_\mu, \Ev_\mu$ denotes probability and expectation under the conditional law where the initial distribution of the Markov chain under investigation is $\mu$. Similarly, $\Pv_x$ is the law under which the chain starts at $x$.

\section{Preliminaries}\label{sec::prelim}
In this section we collect the preliminary lemmas to be able to carry through the proofs quickly afterwards.
The reader familiar with notions of strong stationary times, separation distance, and Dirichlet forms might want jump forward to Lemmas \ref{lem::opt_sst_lamplighter} and \ref{lem::opt_sst_lamplighter_2} immediately, and check the other lemmas here only when needed.

The first lemma is a common useful tool to prove lower bounds for relaxation times, by giving the variational characterization of the spectral gap. First we start with a definition.

Let $P$ be a reversible transition matrix with stationary distribution $\pi$ on the state space $\Omega$ and let $\Ev_\pi [\phi]:= \sum_{y\in \Omega}\phi(y) \pi(y)$. The Dirichlet form associated to the pair $(P,\pi)$ is defined for functions $\phi$ and $\eta$ on $\Omega$  by
\[ \mathcal E(\phi, \eta):=\langle (I-P)\phi, \eta \rangle_\pi= \sum_{y\in \Omega} (I-P)\phi(y) \eta(y)\pi(y). \]
It is not hard to see \cite[Lemma 13.11]{LPW08} that
\begin{equation} \label{def::dirichlet} \mathcal E(\phi):=\mathcal E(\phi, \phi)=\frac12 \E_\pi\left[(\phi(X_1)-\phi(X_0))^2\right]\end{equation}
The next lemma relates the spectral gap of the chain to the Dirichlet form (for a short proof see \cite{AF02} or \cite[Lemma 13.12]{LPW08}):
\begin{lemma}[Variational characterization of the spectral gap]\label{lem::variational}
The spectral gap $\gamma= 1-\la_2$ of a reversible Markov Chain satisfies
\be\label{eq::variational} \gamma = \min\limits_{\phi: \Vv_{\pi} \phi\ne 0} \frac{\mathcal E[\phi]}{\Vv_\pi \phi}, \ee
where $\Vv_\pi\phi= \Ev_\pi[\phi^2]-\left(\Ev_\pi[\phi]\right)^2.$
\end{lemma}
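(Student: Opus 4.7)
The plan is to use the spectral theorem for the reversible operator $P$. Because $P$ is reversible with respect to $\pi$, it is self-adjoint on $L^2(\pi)$ equipped with the inner product $\langle \phi, \eta\rangle_\pi = \sum_x \phi(x)\eta(x)\pi(x)$. Hence $P$ admits an orthonormal basis of real eigenfunctions $f_1 \equiv \mathbf{1}, f_2, \ldots, f_{|\Omega|}$ with real eigenvalues $1 = \lambda_1 > \lambda_2 \geq \cdots \geq \lambda_{|\Omega|} \geq -1$ (strict inequality at the top by irreducibility, which the lazy chain gives us; the rest is aperiodicity). I would begin the proof by recalling this spectral decomposition.

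Next I would expand an arbitrary test function in the eigenbasis: write $\phi = \sum_{i=1}^{|\Omega|} a_i f_i$ with $a_i = \langle \phi, f_i\rangle_\pi$. Since $f_1 \equiv 1$, we have $a_1 = \E_\pi[\phi]$, and orthonormality gives
\[
\Vv_\pi \phi = \E_\pi[\phi^2] - (\E_\pi[\phi])^2 = \sum_{i=1}^{|\Omega|} a_i^2 - a_1^2 = \sum_{i=2}^{|\Omega|} a_i^2.
\]
On the other hand, since $(I-P)f_i = (1-\lambda_i) f_i$,
\[
\mathcal{E}(\phi) = \langle (I-P)\phi, \phi\rangle_\pi = \sum_{i=1}^{|\Omega|} (1-\lambda_i) a_i^2 = \sum_{i=2}^{|\Omega|} (1-\lambda_i) a_i^2,
\]
where the $i=1$ term drops because $\lambda_1 = 1$. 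Both identities together with the fact that $\mathcal{E}(\phi) = \tfrac{1}{2}\E_\pi[(\phi(X_1)-\phi(X_0))^2]$ (given in \eqref{def::dirichlet}) are what I would record explicitly.

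From here the bound is immediate: provided $\Vv_\pi \phi \neq 0$, some $a_i$ with $i\geq 2$ is nonzero, and
\[
\frac{\mathcal{E}(\phi)}{\Vv_\pi \phi} = \frac{\sum_{i=2}^{|\Omega|} (1-\lambda_i) a_i^2}{\sum_{i=2}^{|\Omega|} a_i^2} \geq 1 - \lambda_2 = \gamma,
\]
since $1-\lambda_i \geq 1-\lambda_2$ for all $i \geq 2$. Equality is attained by $\phi = f_2$, for which $\Vv_\pi f_2 = 1 \neq 0$ and $\mathcal{E}(f_2) = 1-\lambda_2$, showing that the minimum is exactly $\gamma$.

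There is really no hard step here; the only conceptual point is invoking reversibility to diagonalize $P$ in $L^2(\pi)$. One small care item is to make sure the minimization is over \emph{nonconstant} $\phi$ (the condition $\Vv_\pi \phi \neq 0$ in the statement), so that the Rayleigh quotient is well defined, and to note that the class is nonempty since $f_2$ belongs to it.
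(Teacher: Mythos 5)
Your proof is correct and is exactly the standard spectral-decomposition argument; the paper does not prove this lemma itself but cites it (Lemma 13.12 of \cite{LPW08} and \cite{AF02}), and your argument reproduces that cited proof. The only tiny slip is the parenthetical attribution of the simple top eigenvalue to laziness/aperiodicity --- simplicity of the eigenvalue $1$ follows from irreducibility alone, and neither laziness nor aperiodicity is needed for the Rayleigh-quotient identity --- but this does not affect the proof.
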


A very useful object to prove the upper bound on $\trel$ and both bounds for $\tmix$ is the concept of strong stationary times.
Recall the definition from \eqref{def::sst}. It is not hard to see (\cite[Lemma 6.9]{LPW08}) that this is equivalent to
\be\label{eq::sst_ineq} \Pv_{x}\left[ X_t=y, \tau \le t \right] = \pi(y) \Pv_x[\tau\le t].\ee
To be able to relate the tail of the strong stationary times to the mixing time of the graphs, we need another distance from stationary measure, called the separation distance:
\be\label{eq::sep}
s_x(t):=\max_{y\in \Omega} \left[ 1-\frac{P^t(x,y)}{\pi(y)}\right].
\ee
 The relation between the separation distance and any strong stationary  time $\tau$ is the following inequality from \cite{AF02} or \cite{DF90} or \cite[Lemma 6.11]{LPW08}:
\be\label{eq::sep_ineq}\forall x \in \Omega:  s_x(t)\le \Pv_x(\tau>t). \ee
Throughout the paper, we will need a slightly stronger result than \eqref{eq::sep_ineq}. Namely, by \cite[Remark 3.39]{DF90} or from the proof of \eqref{eq::sep_ineq} in \cite[Lemma 6.11]{LPW08} it follows that in \eqref{eq::sep_ineq} equality holds \emph{if $\tau$ has a halting state $h(x)$ for $x$}.
Unfortunately, we just point out that the \cite[Remark 6.12]{LPW08} is not true and the statement can not be reversed: the state $h(x,t)$ maximizing the separation distance at time $t$ can also depend on $t$ and thus the existence of a halting state is not necessarily needed to get equality in \eqref{eq::sep_ineq}.

On the other hand, one can always construct $\tau$ such that \eqref{eq::sep_ineq} holds with equality for every $x\in \Omega$. This is a key ingredient to our proofs, so we cite it as a Theorem (with adjusted notation to the present paper).
\begin{theorem}\label{thm::AD}[Aldous, Diaconis]\emph{\cite[Proposition 3.2]{AD86}}
Let $(X_t, t\ge 0)$ be an irreducible aperiodic Markov chain on a finite state space $\Omega$ with initial state $x$ and stationary distribution $\pi$, and let $s_x(t)$ be the separation distance defined as in \eqref{eq::sep}.
Then
\begin{enumerate}
\item if $\tau$ is a strong stationary time for $X_t$, then $s_x(t) \le \Pv_x(\tau>t)$ for all $t\ge 0.$
\item Conversely, there exists a strong stationary time $\tau$ such that $s_x(t)= \Pv_x(\tau>t)$ holds with equality.
    \end{enumerate}
\end{theorem}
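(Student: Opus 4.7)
The plan is to prove the two parts separately: (1) is a short pointwise computation, while (2) requires an inductive construction of $\tau$ that saturates the bound.

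For (1), fix a strong stationary time $\tau$ and a state $y\in\Omega$. Splitting $P^t(x,y)$ according to $\{\tau\le t\}$ and using the strong-stationary identity $\Pv_x[X_t=y,\tau\le t]=\pi(y)\Pv_x[\tau\le t]$ gives
$$\pi(y)-P^t(x,y)=\pi(y)\Pv_x[\tau>t]-\Pv_x[X_t=y,\tau>t]\le \pi(y)\Pv_x[\tau>t].$$
Dividing by $\pi(y)$ and maximizing over $y$ yields $s_x(t)\le\Pv_x[\tau>t]$.

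For (2), I will construct $\tau$ inductively while maintaining the invariant
$$\Pv_x[X_t=y,\tau\le t]=\pi(y)(1-s_x(t)) \quad \text{for all } y\in\Omega,$$
which automatically yields both $\Pv_x[\tau\le t]=1-s_x(t)$ and the strong-stationary identity $\Pv_x[X_t=y\mid\tau\le t]=\pi(y)$. A necessary preliminary is the monotonicity $s_x(t)\le s_x(t-1)$. Using reversibility in the form $P(z,y)/\pi(y)=P(y,z)/\pi(z)$, one rewrites
$$\frac{P^t(x,y)}{\pi(y)}=\sum_z P(y,z)\,\frac{P^{t-1}(x,z)}{\pi(z)}\ge \min_z\frac{P^{t-1}(x,z)}{\pi(z)}=1-s_x(t-1),$$
and minimizing the left side over $y$ gives $1-s_x(t)\ge 1-s_x(t-1)$.

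Given the invariant at time $t-1$, one Markov step yields $\Pv_x[X_t=y,\tau\le t-1]=\pi(y)(1-s_x(t-1))$. To advance the invariant I stop, conditionally on $\{X_t=y,\tau>t-1\}$, with probability
$$\alpha_t(y)=\frac{\pi(y)(s_x(t-1)-s_x(t))}{P^t(x,y)-\pi(y)(1-s_x(t-1))},$$
interpreted as $0$ whenever the denominator vanishes. Monotonicity gives $\alpha_t(y)\ge 0$, and the defining inequality $\pi(y)(1-s_x(t))\le P^t(x,y)$ of $s_x(t)$ gives $\alpha_t(y)\le 1$. Formally, introducing independent uniforms $(U_t)$ and setting $\tau=\min\{t\ge 0: U_t\le\alpha_t(X_t)\}$ produces a bona fide randomized stopping time, and a routine induction shows the invariant propagates.

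The main obstacle is the degenerate case where $P^t(x,y)=\pi(y)(1-s_x(t-1))$ for some $y$, so that the denominator of $\alpha_t(y)$ vanishes; one must check the numerator vanishes as well. But whenever such a $y$ exists, $\min_z P^t(x,z)/\pi(z)\le 1-s_x(t-1)$, hence $s_x(t)\ge s_x(t-1)$; combined with monotonicity this forces $s_x(t)=s_x(t-1)$, so the numerator is indeed zero and $\alpha_t(y)=0$ is consistent. Once this technicality is dispatched, the equality $\Pv_x[\tau>t]=s_x(t)$ is built into the construction by induction, completing the proof of (2).
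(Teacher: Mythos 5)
The paper does not prove this statement at all: it is imported verbatim as \cite[Proposition 3.2]{AD86} precisely because the authors treat it as a known black box, so there is no in-paper argument to compare against. Your proposal supplies the classical Aldous--Diaconis proof, and it is essentially correct: part (1) is the standard pointwise splitting over $\{\tau\le t\}$ using \eqref{eq::sst_ineq}, and part (2) is the standard ``filling rule'' that stops, given $\{X_t=y,\tau>t-1\}$, with exactly the probability needed to keep $\Pv_x[X_t=y,\tau\le t]=\pi(y)(1-s_x(t))$; your checks that $0\le\alpha_t(y)\le 1$ and your treatment of the vanishing denominator are the right ones. Two small points deserve attention. First, the theorem as stated assumes only irreducibility and aperiodicity, but your proof of the monotonicity $s_x(t)\le s_x(t-1)$ invokes reversibility; this is unnecessary, since writing $P^t(x,y)/\pi(y)=\sum_z \bigl(P^{t-1}(x,z)/\pi(z)\bigr)\cdot\bigl(\pi(z)P(z,y)/\pi(y)\bigr)$ and using stationarity $\sum_z\pi(z)P(z,y)=\pi(y)$ gives the same bound for any chain (your version is fine for the reversible chains actually used in the paper, but the general statement should not lean on reversibility). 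Second, to conclude that $\tau$ is a genuine (a.s.\ finite) randomized stopping time and that it is strong stationary, you should add one line noting that the invariant gives $\Pv_x[\tau>t]=s_x(t)\to 0$ by irreducibility and aperiodicity, and that $\Pv_x[X_\tau=y,\tau=t]=\pi(y)(s_x(t-1)-s_x(t))=\pi(y)\Pv_x[\tau=t]$, which is exactly Definition \ref{def::sst}. With those additions the argument is complete.
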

Combining these, we will call a strong stationary time $\tau$ \emph{separation optimal} if it achieves equality in \eqref{eq::sep_ineq}. Mind that every stopping time possessing halting states is separation optimal, but the reversed statement is not necessarily true.
The next two lemmas, which we will use several times, construct two stopping times for the graph $H\wr G$. The first one will be used to lower bound the separation distance and the second one upper bounds it.

We start with introducing the notation
\be\label{def::localtime} L_v(t)=2\sum_{i=0}^{t}\ind(X_i=v)-\delta_{X_0,v}-\delta_{X_t,v}\ee for the number of moves on the lamp graph $H_v, v\in G$ by the walker up to time $t$.  Slightly abusing terminology, we call it the local time at vertex $v\in \CG$.

Let us further denote the random walk with transition matrix $Q$ on $H$ by $Z$. Since the moves on the different lamp graphs $H_v, v\in G$ are taken independently given $L_v(t), v\in G$, we can define for each $v\in G$ an independent copy of the chain $Z$, denoted by $Z_v$, running on $H_v$. Thus, the position of the lamplighter chain at time $t$ can be described as
\[ (\un F_t, X_t ) = \left( \left(Z_v(L_v(t)) \right)_{v\in G}, X_t \right)\]
Below we will use copies of a strong stationary time $\tau_H$ for each $v\in G$, meaning that $\tau_H(v)$ is defined in terms of $Z_v$, and given the local times $L_v(t)$, $\tau_H(v)$-s are independent of each other.

\begin{lemma}\label{lem::opt_sst_lamplighter}
Let $\tau_H$ be any strong stationary time for the Markov chain on $\Hh$. Take the conditionally independent copies of $\left(\tau_H(v)\right)_{v\in\CG}$ given the local times $L_v(t)$, realized on the lampgraphs $H_v$-s and define the stopping time $\tau^\diamond$ for $X^\diamond$ by
\be\label{eq::tauhg} \tau^\diamond:= \inf\left\{t: \forall v\in G: \tau_H(v)\le L_v(t)\right\}. \ee
Then, for any starting state $(\un f_0,  x_0)$ we have
\be \label{eq::tau_1_eq} \Pv_{(\un f_0,x_0)}\left[ X^\diamond_t =(\un f, x), \tau^\diamond=t \right] =  \prod_{v\in \CG}\!\! \pi_H(f_v)\cdot  \Pv_{(\un f_0,x_0)}\left[ X_t=x, \tau^\diamond= t\right]. \ee
If further $\tau_H$ has halting states then the vectors $(\un h(f_v(0)),y)$ are halting state vectors for $\tau^\diamond$ and initial state $(\un f_0, x_0)$ for every $y\in G$.
\end{lemma}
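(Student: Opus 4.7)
The plan is to condition on the trajectory of the base walk $X_{[0,t]}$ and on the collection of strong stationary times $(\tau_H(v))_{v\in\CG}$, and then apply the strong stationary property of each $\tau_H(v)$ to the lamp chain $Z_v$, exploiting the fact that the lamp chains are mutually independent across $v\in\CG$ given the base trajectory.

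For the identity \eqref{eq::tau_1_eq}, first note that once $X_{[0,t]}$ is fixed the local times $L_v(t)$ are determined by \eqref{def::localtime}, so the event
\[
\{\tau^\diamond = t\} \;=\; \bigl\{\forall v\in\CG:\ \tau_H(v)\le L_v(t)\bigr\}\,\setminus\,\bigl\{\forall v\in\CG:\ \tau_H(v)\le L_v(t-1)\bigr\}
\]
is measurable with respect to the pair $\bigl(X_{[0,t]},(\tau_H(v))_{v\in\CG}\bigr)$. For each individual $v$, the strong Markov property applied to $Z_v$ at $\tau_H(v)$, together with \eqref{eq::sst_ineq} and the $Q$-stationarity of $\pi_H$, yields
\[
\Pv_{f_v(0)}\!\bigl[Z_v(s)=f \,\big|\, \tau_H(v)\le s\bigr]\;=\;\pi_H(f)
\]
for every $s\ge 0$ and every $f\in V(\Hh)$, independently of the value of $\tau_H(v)$. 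Because the chains $\{Z_v:v\in\CG\}$ are mutually independent given $X_{[0,t]}$, the joint conditional distribution of $(Z_v(L_v(t)))_{v\in\CG}$ on the event $\{\tau^\diamond=t\}$---which in particular enforces $\tau_H(v)\le L_v(t)$ for every $v$---factorises as $\prod_{v\in\CG}\pi_H(f_v)$, independently of $X_{[0,t]}$. Summing over base trajectories ending at $x$ then produces \eqref{eq::tau_1_eq}. A short set-difference computation based on the display above absorbs the ``not earlier'' clause in the event $\{\tau^\diamond=t\}$: both terms being subtracted factorise in the same way because the smaller event $\bigcap_v\{\tau_H(v)\le L_v(t-1)\}$ also implies $\tau_H(v)\le L_v(t)$ for every $v$.

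For the halting-state assertion, fix any $y\in\CG$ and suppose $X^\diamond_t=(\un h(f_0),y)$, that is, $Z_v(L_v(t))=h(f_v(0))$ for every $v\in\CG$. Applying the halting-state property of $\tau_H$ to the chain $Z_v$ started at $f_v(0)$ gives $\tau_H(v)\le L_v(t)$ for each $v\in\CG$; by \eqref{eq::tauhg} this forces $\tau^\diamond\le t$, which is precisely what it means for $(\un h(f_0),y)$ to be a halting state for $\tau^\diamond$ from initial state $(\un f_0,x_0)$.

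The only real subtlety is the first part: verifying that conditionally on $X_{[0,t]}$ and on $(\tau_H(v))_{v\in\CG}$, the lamp values $Z_v(L_v(t))$ are independent and each $\pi_H$-distributed on the event $\{\tau^\diamond=t\}$, despite the ``first time'' nature of the definition of $\tau^\diamond$. The halting-state claim is then a direct consequence of the definitions.
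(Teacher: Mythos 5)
Your proposal is correct and follows essentially the same route as the paper: condition on the base trajectory (which determines the local times), apply the strong stationarity of each $\tau_H(v)$ to the independent lamp chains $Z_v$ to factor out $\prod_{v}\pi_H(f_v)$, handle the passage from $\{\tau^\diamond\le t\}$ to $\{\tau^\diamond=t\}$ by differencing, and deduce the halting-state claim directly from the definitions. No gaps.
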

We postpone the proof and continue with a corollary of the lemma:
\begin{corollary}\label{cor::sep_lower_bound}
Let $\tau_H$ be a strong stationary time for the Markov chain on $\Hh$ which has a halting state $h(z)$ for any $z\in H$. Then define $\tau^\diamond$ as in Lemma \ref{lem::opt_sst_lamplighter}. Then for the separation distance on the lamplighter chain $H\wr G$ the following lower bound holds:
\[  s_{(\un f_0,x_0)}(t) \ge \Pv_{(\un f_0,x_0)}\left[ \tau^\diamond > t \right].\]
\end{corollary}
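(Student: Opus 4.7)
The plan is to exhibit a specific target state for which the ratio $P^t_\diamond/\pi^\diamond$ is small enough, using the halting state vectors produced by Lemma \ref{lem::opt_sst_lamplighter}. Fix an arbitrary $y\in G$ and let $\un h=\un h(f_v(0))$, so that $(\un h,y)$ is a halting state for $\tau^\diamond$ from $(\un f_0,x_0)$. The goal is to establish the identity
\[
 P^t_\diamond\bigl((\un f_0,x_0),(\un h,y)\bigr) \;=\; \prod_{v\in G}\pi_H(h_v)\cdot \Pv_{(\un f_0,x_0)}\bigl[X_t=y,\; \tau^\diamond\le t\bigr],
\]
which, once proved, yields $1-P^t_\diamond/\pi^\diamond(\un h,y)=1-\Pv[X_t=y,\tau^\diamond\le t]/\pi_G(y)$, and the conclusion follows by averaging these quantities against $\pi_G$ and using that the maximum is at least the $\pi_G$-average.

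To prove the identity I would first use the halting property $\{X^\diamond_t=(\un h,y)\}\subseteq\{\tau^\diamond\le t\}$ together with the strong Markov property applied at $\tau^\diamond$, giving
\[
 P^t_\diamond((\un f_0,x_0),(\un h,y)) = \sum_{s\le t}\sum_{(\un g,z)} \Pv_{(\un f_0,x_0)}[X^\diamond_s=(\un g,z),\tau^\diamond=s]\, P^{t-s}_\diamond((\un g,z),(\un h,y)).
\]
Lemma \ref{lem::opt_sst_lamplighter} factorises the first factor as $\prod_v\pi_H(g_v)\Pv[X_s=z,\tau^\diamond=s]$. The key observation is then that if we feed the chain $X^\diamond$ an initial lamp configuration drawn from $\prod_v\pi_H$ (independent of the walker position $z$), then at every later time the lamps remain iid $\pi_H$ and independent of the walker: the walker's dynamics do not see the lamps, and each lamp update uses $Q$, which has $\pi_H$ as its stationary measure. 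Consequently
\[
 \sum_{\un g}\prod_v\pi_H(g_v)\, P^{t-s}_\diamond((\un g,z),(\un h,y)) \;=\; \prod_v\pi_H(h_v)\cdot P^{t-s}(z,y),
\]
where $P^{t-s}$ is the kernel of $X$. Substituting and using the Markov property of $X$ to collapse $\sum_z\Pv[X_s=z,\tau^\diamond=s]P^{t-s}(z,y)=\Pv[X_t=y,\tau^\diamond=s]$, then summing in $s$, gives the claimed identity.

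With the identity in hand, for each $y\in G$
\[
 s_{(\un f_0,x_0)}(t) \;\ge\; 1-\frac{P^t_\diamond((\un f_0,x_0),(\un h,y))}{\pi^\diamond((\un h,y))} \;=\; 1-\frac{\Pv[X_t=y,\tau^\diamond\le t]}{\pi_G(y)}.
\]
The maximum over $y$ of the right-hand side is at least its $\pi_G$-weighted average, which equals $1-\Pv[\tau^\diamond\le t]=\Pv[\tau^\diamond>t]$, completing the proof.

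The one subtle point I anticipate is the stationarity argument in Step 4: one must verify carefully that product-$\pi_H$ lamps remain product-$\pi_H$ under the joint dynamics, because even though only two lamps are touched at each step, they are the two lamps at the endpoints of a random edge selected by the walker; the argument goes through because the walker's motion is independent of the lamps and $Q$-updates of already-$\pi_H$-distributed lamps preserve both the marginal and the independence across sites.
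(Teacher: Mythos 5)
Your proposal is correct and follows essentially the same route as the paper: use the halting-state vector to insert the event $\{\tau^\diamond\le t\}$ into the transition probability, invoke the factorization of Lemma \ref{lem::opt_sst_lamplighter}, and then optimize over the base vertex (your averaging step is equivalent to the paper's choice of the vertex minimizing $\Pv[X_t=x\mid\tau^\diamond\le t]/\pi_G(x)$). The only difference is that you re-derive the ``$\tau^\diamond\le t$'' version of the factorization identity via the strong Markov property and stationarity propagation, whereas the paper's proof of the lemma already establishes that version directly.
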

\begin{proof}
Observe that reaching the halting state vector $(\un h(f_v(0)), x)$ implies the event $\tau^\diamond\le t$ so we have
\be \label{eq::cor_est} 1- \frac{\Pv_{(\un f_0, x_0)}\left[ X^\diamond_t= (\un h(f_v(0)), x) \right] }{ \pi_G(x)\prod\limits_{v\in \CG}\pi_H\left(h(f_v(0))\right) } = 1- \frac{\Pv_{(\un f_0, x_0)}\left[ X^\diamond_t= (\un h(f_v(0)), x), \tau^\diamond \le t  \right] }{ \pi_G(x)\prod\limits_{v\in \CG} \pi_H\left(h(f_v(0))\right) } \ee
 Now pick a vertex $x_{x_0,t}\in G$ which minimizes $\Pv\left[ X_t=x_{x_0,t}| \tau^\diamond\le t\right]/ \pi_G(x_{x_0,t})$. This quotient is less than $1$ since both the numerator and the denominator are probability distributions on $G$. Then, using this and Lemma \ref{lem::opt_sst_lamplighter}, the right hand side of \eqref{eq::cor_est} equals
\[ 1- \frac{\Pv_{(\un f_0,x_0)}\left[ X_t = x_{x_0,t} |  \tau^\diamond \le t\right] \Pv_{(\un f_0,x_0)}[\tau^\diamond\le t] }{ \pi_G(x_{x_0,t})} \ge 1-\Pv_{(\un f_0,x_0)}\left[ \tau^\diamond \le t \right].  \]
Clearly the separation distance is larger than the left hand side of \eqref{eq::cor_est}, and the proof of the claim follows. Note that the proof only works if $\tau_H$ has a halting state and thus it is separation-optimal.
\end{proof}

\begin{proof}[Proof of Lemma \ref{lem::opt_sst_lamplighter}]

 First we show that \eqref{eq::tau_1_eq} holds using the conditional independence of $\tau_H(v)$-s given the  number of moves $L_v(t)$ on the lamp graphs $H(v), v\in \CG$. Clearly, conditioning on the trajectory of the walker $\{ X_1, \dots, X_{t-1}, X_t=x\} := X{[1,t]}$ contains the knowledge of $L_v(t)$-s as well. We will omit to note the dependence of $\Pv$ on initial state $(\un f_0, x_0)$ for notational convenience. The left hand side of condition \eqref{eq::sst_ineq} equals
\[ \Pv\left[X_t^\diamond=(\un f,x), \tau^\diamond\le t \right]
 = \!\!\!\sum_{X_{[1,t]}} \!\!\!\Pv\left[ X_{t}^\diamond=(\un f,x),\tau^\diamond\le t| X_{[1,t]} \right] \Pv\left[X_{[1,t]}\right].\]
Recall that $Z_v$ stands for the Markov chain on the lamp graph $H_v$, and their conditional independence given $L_v(t)$-s.
Due to \eqref{eq::sst_ineq} and $\tau_H$ being strong stationary for $\Hh$ we have for all $v\in\CG$ that
\[ \Pv[Z_v(L_v(t))=f_v, \tau_H(v)\le L_v(t)|X_{[1,t]}] =\pi_{\Hh}(f_v)\cdot \Pv[\tau_H(v)\le L_v(t)|X_{[1,t]}].\]
Now we use that $\tau_H(v)$-s are conditionally independent given the local times to see that
\[ \ba  \Pv&\left[ X_{t}^\diamond=(\un f,x),\tau^\diamond\le t| X_{[1,t]} \right]\\
&=\Pv\left[ \forall v\in G: Z_v(L_v(t))=f_v,  \tau_H(v)\le L_v(t), X_t=x,  | X_{[1,t]} \right] \\
&= \prod_{v\in \CG}\!\! \pi_H(f_v)\!\prod_{v\in \CG} \!\!\Pv\left[\tau_{\Hh}(v)\le L_v(t)|X_{[1,t]}\right]  \ea \]
Note that the second product gives exactly $\Pv\left[\tau^\diamond \le t | X_{[1,t]} \right]$, yielding
\be\label{eq::proof_sst} \Pv\left[X_{t}^\diamond=(\un f,x), \tau^\diamond\le t \right]
=\prod_{v\in \CG}\!\! \pi_H(f_v)\!\! \sum_{X_{[1,t]}}\!\Pv\left[\tau^\diamond \le t | X_{[1,t]} \right] \Pv[X_{[1,t]} ]\ee
As $X_t=x$ remains fixed over the summation, thus summing over all possible $X[1,t]$ trajectories yields
\[\Pv[X_t^\diamond=(\un f, x), \tau^\diamond \le t]=\prod\limits_{v\in \CG}\!\! \pi_H(f_v) \Pv[X_t=x, \tau^\diamond \le t].\]To turn the inequality $\tau^\diamond \le t$ inside the probability to equality can be done the same way as in \eqref{eq::sst_ineq} and left to the reader.
 To see that the vector of halting states $(\un h(f_v(0)),y)$ is a halting state for $\tau^\diamond$ for any $y\in G$ is based on the simple fact that reaching the halting state vector $\left( h(f_v)_{v\in \CG},y \right)$ means that all the halting states $h(f_v), v \in \CG$ have been reached on all the lamp graphs $\Hh_v, v\in G$-s. Thus, by definition of the halting states, all the strong stationary times $\tau_{\Hh}(v)$ have happened. Then, by its definition, $\tau^\diamond$ has happened as well.
\end{proof}
Recall the definition \eqref{eq::tauhg} of $\tau^diamond$. Then we can construct a strong stationary time for $H\wr G$, described in the next lemma.
\begin{lemma}\label{lem::opt_sst_lamplighter_2}
Let $\tau^\diamond$ be the stopping time defined as in Lemma \ref{lem::opt_sst_lamplighter}, and let $\tau_G(x)$ be a strong stationary time for $G$ starting from $x\in G$ and define $\tau^\diamond_2$ by
\be\label{eq::tauhg2} \tau^\diamond_2:= \tau^\diamond+\tau_G(X_{\tau^\diamond}), \ee
where the chain is re-started at $\tau_G$ is started from $(\un F_{\tau_diamond},X_{\tau^diamond})$, run independently of the past and $\tau_G$ is measured in this walk.
Then, $\tau^\diamond_2$ is a strong stationary time for $H \wr G$.
\end{lemma}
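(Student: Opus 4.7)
The plan is to show that the joint distribution of $X^\diamond_{\tau^\diamond_2}$ and $\tau^\diamond_2$ factorizes as $\pid \times \text{law}(\tau^\diamond_2)$. I would do this by separately handling the lamp component and the base component, using Lemma \ref{lem::opt_sst_lamplighter} for the first and the strong Markov property together with the SST property of $\tau_G$ for the second.

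First, Lemma \ref{lem::opt_sst_lamplighter} tells us that at time $\tau^\diamond$, the lamp configuration $\un F_{\tau^\diamond}$ is distributed as $\prod_v \pi_H$, independently of the pair $(\tau^\diamond, X_{\tau^\diamond})$ (indeed, the statement holds conditionally on the walker trajectory up to $\tau^\diamond$). Next, I would observe that the product measure $\prod_v \pi_H$ is invariant under the lamp updates conditionally on any walker trajectory: each update is an application of the $\pi_H$-stationary kernel $Q$ to a single coordinate, and distinct coordinates evolve independently given the trajectory. Combining these two facts, I deduce that for every $s \geq \tau^\diamond$, conditional on $(\tau^\diamond, X_{[0,s]})$, the lamp configuration $\un F_s$ still has distribution $\prod_v \pi_H$. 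This in turn implies that conditional on $\tau^\diamond_2 = s$ and $X_{\tau^\diamond_2} = y$, the lamp configuration $\un F_{\tau^\diamond_2}$ has distribution $\prod_v \pi_H$.

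For the walker position: by the strong Markov property at $\tau^\diamond$, conditional on $\tau^\diamond = t$ and $X_{\tau^\diamond} = x$, the base walker from time $\tau^\diamond$ onward is a lazy random walk starting from $x$, independent of the past. Since $\tau_G(x)$ is a strong stationary time for this walk, $X_{\tau^\diamond_2} = X_{\tau^\diamond + \tau_G}$ has distribution $\pi_G$ independently of $\tau_G$. Summing out $(\tau^\diamond, X_{\tau^\diamond})$ yields $\Pv[X_{\tau^\diamond_2} = y, \tau^\diamond_2 = s] = \pi_G(y)\Pv[\tau^\diamond_2 = s]$. Multiplying by the lamp factor $\prod_v \pi_H(g_v)$ from the first step gives the strong stationary identity $\Pv[X^\diamond_{\tau^\diamond_2} = (\un g, y), \tau^\diamond_2 = s] = \pid((\un g, y))\, \Pv[\tau^\diamond_2 = s]$, as required by Definition \ref{def::sst}.

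The main obstacle is disentangling the conditioning at $\tau^\diamond$: $\tau^\diamond$ depends intricately on the lamp trajectories (through the strong stationary times $\tau_H(v)$), which in turn depend on the walker's trajectory through the local times $L_v(t)$. Lemma \ref{lem::opt_sst_lamplighter} is precisely the tool that resolves this entanglement, by guaranteeing that the lamp marginal at $\tau^\diamond$ is already in product stationarity regardless of the walker trajectory. Once this is granted, the remaining steps are a routine application of the strong Markov property and the invariance of $\prod_v \pi_H$ under the lamp updates.
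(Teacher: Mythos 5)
Your proposal is correct and follows essentially the same route as the paper's proof: condition on $(\tau^\diamond, X^\diamond_{\tau^\diamond})$, invoke Lemma \ref{lem::opt_sst_lamplighter} for product stationarity of the lamps at $\tau^\diamond$, note that the product measure $\prod_v \pi_H$ is preserved by the subsequent lamp updates given the walker trajectory, and apply the strong stationarity of $\tau_G$ for the base component. The paper packages these same ingredients as a single summation identity over intermediate states $(\un g, y)$, but the underlying argument is identical.
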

\begin{proof}[Proof of Lemma \ref{lem::opt_sst_lamplighter_2}]
The intuitive idea of the proof is based on the fact that $\tau_G$ is conditionally independent of $\tau_H$-s and thus the lamp graphs stay stationary after reaching $\tau^\diamond$, and stationarity on $G$ is reached by adding the term $\tau_{G}(X_{\tau^\diamond})$. The proof is not very difficult but it needs a delicate sequence of conditioning. To have shorter formulas, we write shortly $\Pv$ for $\Pv_{(\un f_0,x_0)}$. First we condition on the events $\{ \tau^\diamond=s, X^\diamond_s=(\un g, y) \}$ and make use of \eqref{eq::tau_1_eq} from Lemma \ref{lem::opt_sst_lamplighter}.
\be\label{eq::tau_2} \ba \Pv\left[ X^\diamond_t = (\un f, x) , \tau_2^\diamond = t \right]= \sum_{s\le t; (\un g,y)} & \Pv\left[ X^\diamond_t = (\un f, x) , \tau_2^\diamond = t | \tau^\diamond=s, X^\diamond_s=(\un g, y) \right] \\ & \cdot \prod_{v \in G} \pi_H(g_v) \cdot \Pv\left[\tau^\diamond =s, X_s=y \right]. \ea \ee
Now for the conditional probability inside the sum on the right hand side we have
\[\ba &\Pv\left[ X^\diamond_t = (\un f, x) , \tau_2^\diamond = t | \tau^\diamond=s,X^\diamond_s=(\un g,y) \right] \\
& =\Pv\left[ X^\diamond_t = (\un f, x); \tau_G(y)\circ\theta_s= t-s| \tau^\diamond= s, X^\diamond_s=(\un g,y) \right]  \ea\]
where $\tau_G(y)\circ\theta_s$ means the time-shift of $\tau_G(y)$ by $s$, and we also used that $\tau_G$ is only depending on $y$.
We claim that
\[ \ba \sum_{\un g}&\left( \Pv_{(\un g, y)}\left[X^\diamond_{t-s}=(\un f, x), \tau_G (y)=t-s\right] \prod_{v \in G} \pi_H(g_v)\right)\\&
 =  \Pv_y[X_{t-s}=x,\tau_G(y)=t-s]\prod_{v \in G} \pi_H(f_v)\\
 &= \pi_G(x) \Pv_y[\tau_G =t-s]\prod_{v \in G} \pi_H(f_v).\ea \]
The first equality holds true since $\tau_G(y)$ is independent of the lampgraphs and the transition rules of $X^\diamond$ on $H\wr G$ tells us that the lamp-chains stay stationary. We omit the details of the proof. The second equality is just the strong stationarity property of $\tau_G$.
Thus, using this and rearranging the order of terms on the right hand side of \eqref{eq::tau_2} we end up with
\[ \sum_{s\le t, y\in G}  \Pv_y[\tau_G=t-s] \Pv[\tau^\diamond=s, X_s=y] \cdot \pi_G(x)\prod_{v \in G} \pi_H(f_v). \]
Then, realizing that the sum is just $\Pv[\tau^\diamond +\tau_G(X_{\tau^\diamond}) =t]$ finishes the proof.
\end{proof}
We continue with a lemma which relates the separation distance to the total variation distance:
Let us define first
\be\label{def::dxt}  d_x(t):= \| P^t(x,\cdot)-\pi(\cdot)\|_{\text{TV}} = \frac12 \sum_{y\in \Omega} \left|P^t(x,y)-\pi(y)\right|. \ee
The total variation distance of the chain from stationarity is defined as:
\[ d(t):= \max_{x\in \Omega} d_x(t). \]
The next lemma relates the total and the separation distance:
\begin{lemma}\label{lem::sep_tv}
For any reversible Markov chain and any state $x \in \Omega$, the separation distance from initial vertex $x$ satisfies:
\begin{align}
d_x(t) &\le s_x(t) \label{eq::tvlesep}\\
s_x(2t) & \le 4 d(t) \label{eq::sepletv}
\end{align}
\end{lemma}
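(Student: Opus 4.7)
The plan is to handle \eqref{eq::tvlesep} by a direct computation from the definitions and \eqref{eq::sepletv} by a standard truncation argument that uses reversibility. For \eqref{eq::tvlesep}, I would rewrite the total variation distance as a sum over the coordinates where $P^t(x,\cdot)$ falls short of $\pi$:
\[ d_x(t) = \sum_{y:\, P^t(x,y) < \pi(y)} (\pi(y) - P^t(x,y)) = \sum_y \pi(y)\bigl(1 - P^t(x,y)/\pi(y)\bigr)^{+}, \]
and then bound each summand by $\pi(y)\,s_x(t)$ using the definition \eqref{eq::sep} of $s_x(t)$. Summing gives $d_x(t)\le s_x(t)$.

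For \eqref{eq::sepletv}, the first step is to produce a working identity for $P^{2t}(x,y)/\pi(y)$. Using the decomposition $P^{2t}(x,y)=\sum_z P^t(x,z)P^t(z,y)$ together with the reversibility relation $\pi(z)P^t(z,y)=\pi(y)P^t(y,z)$ gives
\[ \frac{P^{2t}(x,y)}{\pi(y)} = \sum_z \frac{P^t(x,z)\,P^t(y,z)}{\pi(z)}, \]
and subtracting this from $\sum_z P^t(x,z) = 1$ rearranges to
\[ 1 - \frac{P^{2t}(x,y)}{\pi(y)} = \sum_z \bigl(\pi(z)-P^t(y,z)\bigr)\frac{P^t(x,z)}{\pi(z)}. \]

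I would then split the sum at the truncation set $B = \{z : P^t(x,z) \le 2\pi(z)\}$ and bound the two pieces separately. On $B$, the ratio $P^t(x,z)/\pi(z)\le 2$, and discarding the non-positive summands (those with $P^t(y,z)\ge \pi(z)$) bounds the $B$-part by $2\, d_y(t)\le 2\, d(t)$. On $B^c$, the defining inequality $P^t(x,z)>2\pi(z)$ gives $\pi(z)<P^t(x,z)-\pi(z)$, hence $\pi(B^c)\le d_x(t)$; combined with the crude estimate $\pi(z)-P^t(y,z)\le\pi(z)$, the $B^c$-part is at most $P^t(x,B^c)\le \pi(B^c)+d_x(t)\le 2\, d(t)$. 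Summing both pieces and taking the max over $y$ yields $s_x(2t)\le 4\, d(t)$.

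The main (minor) obstacle is picking the truncation threshold and correctly handling the variable signs of $\pi(z)-P^t(y,z)$; the factor $2$ in the definition of $B$ is the natural round choice that balances the $\pi(B^c)\le d_x(t)$ bound on $B^c$ against the inherited factor of $2$ on $B$, giving the advertised constant $4$.
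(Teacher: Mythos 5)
Your proof of \eqref{eq::tvlesep} is the standard computation and coincides with the one the paper points to: write $d_x(t)=\sum_y \pi(y)\bigl(1-P^t(x,y)/\pi(y)\bigr)^{+}$ and bound each nonzero summand by $\pi(y)s_x(t)$. For \eqref{eq::sepletv}, however, you take a genuinely different route. The paper (via the cited \cite[Lemma 19.3]{LPW08}) starts from the same reversibility identity $P^{2t}(x,y)/\pi(y)=\sum_z P^t(x,z)P^t(y,z)/\pi(z)$, but then applies Cauchy--Schwarz to get $P^{2t}(x,y)/\pi(y)\ge\bigl(\sum_z\sqrt{P^t(x,z)P^t(y,z)}\bigr)^2\ge\bigl(\sum_z P^t(x,z)\wedge P^t(y,z)\bigr)^2=\bigl(1-\|P^t(x,\cdot)-P^t(y,\cdot)\|_{\TV}\bigr)^2$, and finishes with the triangle inequality $\|P^t(x,\cdot)-P^t(y,\cdot)\|_{\TV}\le 2d(t)$, giving $1-P^{2t}(x,y)/\pi(y)\le 1-(1-2d(t))^2\le 4d(t)$. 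You instead rearrange to $1-P^{2t}(x,y)/\pi(y)=\sum_z\bigl(\pi(z)-P^t(y,z)\bigr)P^t(x,z)/\pi(z)$ and split at the set $B=\{z:P^t(x,z)\le 2\pi(z)\}$; on $B$ the density ratio is at most $2$ and the positive part of the sum is bounded by $2d_y(t)$, while on $B^c$ you use $\pi(B^c)\le d_x(t)$ and $P^t(x,B^c)\le\pi(B^c)+d_x(t)$ to get another $2d(t)$. I checked each step and the argument is sound, yielding the same constant $4$. Your version is more elementary (no Cauchy--Schwarz, no coupling identity $\sum_z\mu(z)\wedge\nu(z)=1-\|\mu-\nu\|_{\TV}$) and makes transparent where reversibility and the worst-case maximum over $y$ enter; the paper's version is shorter given the cited lemma and in fact gives the marginally stronger bound $4d(t)(1-d(t))$. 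Either is acceptable here, since the lemma is only used up to constants.
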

\begin{proof}
For a short proof of \eqref{eq::tvlesep} see \cite{AF02} or \cite[Lemma 6.13]{LPW08}, and combine \cite[Lemma 19.3]{LPW08} with a triangle inequality to conclude \eqref{eq::sepletv}.
\end{proof}

We will also make use of the following lemma: (\cite[Corollary 12.6]{LPW08})
\begin{lemma}\label{lem::dtlimlambda}
For a reversible, irreducible and aperiodic Markov chain,
\[ \lim_{t\to \infty} d(t)^{1/t}= \la_*,\]
with $\la_*= \max\{|\la|: \la \text{ eigenvalue of P, } \la \neq 1\}.$
\end{lemma}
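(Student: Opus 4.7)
The plan is to exploit reversibility to diagonalize $P$ in $L^2(\pi)$ and then sandwich $d(t)$ between $L^2$-type quantities whose asymptotics can be read off directly from the spectrum. Since $P$ is reversible it is self-adjoint with respect to $\langle \phi,\psi\rangle_\pi=\sum_z \phi(z)\psi(z)\pi(z)$, so there is an orthonormal basis of real eigenfunctions $f_1\equiv 1, f_2,\ldots, f_n$ with eigenvalues $1=\la_1 > \la_2\ge \cdots \ge \la_n$, and irreducibility together with aperiodicity forces $|\la_i|<1$ for $i\ge 2$, so in particular $\la_*<1$. The standard spectral expansion then reads
\[
\frac{P^t(x,y)}{\pi(y)} - 1 \;=\; \sum_{i=2}^{n} \la_i^{\,t} f_i(x)f_i(y).
\]

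For the upper bound $\limsup_t d(t)^{1/t}\le \la_*$, I would pass through the $L^2(\pi)$ distance. A Cauchy--Schwarz step gives $2d_x(t) = \sum_y \pi(y)\lvert P^t(x,y)/\pi(y) - 1\rvert \le \|P^t(x,\cdot)/\pi(\cdot)-1\|_{2,\pi}$, and orthonormality of the $f_i$ evaluates the square of the right hand side as $\sum_{i\ge 2} \la_i^{2t} f_i(x)^2 \le \la_*^{2t}\sum_{i\ge 2} f_i(x)^2$. Maximizing over the finite set $\Omega$ and taking $t$-th roots yields the upper bound immediately.

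For the matching lower bound I would use a single eigenfunction as a test function. Let $f$ be an eigenfunction with eigenvalue $\la$ satisfying $|\la|=\la_*$; orthogonality to $f_1\equiv 1$ gives $\Ev_\pi f=0$, so
\[
\la^t f(x) \;=\; (P^t f)(x) \;=\; \sum_y \bigl(P^t(x,y)-\pi(y)\bigr) f(y),
\]
and hence $|f(x)|\,\la_*^t \le 2\|f\|_\infty\, d_x(t)$. Choosing $x$ where $|f|$ attains its maximum yields $d(t)\ge \tfrac12 \la_*^t$, i.e.\ $\liminf_t d(t)^{1/t}\ge \la_*$. Combined with the previous paragraph this proves the claim.

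There is no serious obstacle; the only two points that need a word of justification are standard, namely that aperiodicity rules out $\la=-1$ via a short period argument (so $\la_*<1$), and that the definition of $\la_*$ guarantees a nonzero eigenfunction realizing the maximum. Alternatively one can short-circuit the whole argument by quoting Gelfand's spectral radius formula applied to $P$ restricted to the orthogonal complement of the constants in $L^2(\pi)$, combined with the comparison $2d(t)\le \max_x\|P^t(x,\cdot)/\pi-1\|_{2,\pi}$ and the reverse bound above; this is essentially the packaging used in \cite[Corollary 12.6]{LPW08}.
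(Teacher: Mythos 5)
Your proof is correct and follows essentially the same spectral route as the paper, which simply quotes \cite[Corollary 12.6]{LPW08} and records the two key inequalities $d_x(t)\le s_x(t)\le \la_*^t/\pi_{\min}$ and $|\la_2|^t\le 2d(t)$; your lower bound via a maximal eigenfunction is identical to the paper's sketch, and your upper bound via Cauchy--Schwarz in $L^2(\pi)$ is the standard variant of the same eigenbasis expansion (note $\sum_{i\ge 2}f_i(x)^2\le 1/\pi(x)$, recovering the $1/\pi_{\min}$ constant). No gaps.
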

The two fundamental steps to prove Lemma \ref{lem::dtlimlambda} are the inequalities stating that for all $x\in \Omega$ we have
\be\ba\label{eq::la_lim_upper} d_x(t) &\le s_x(t) \le  \frac{\la_*^t}{\pi_{\min}},\\
|\la_2|^t&\le 2 d(t)\ea \ee
with $\pi_{\min}= \min_{y\in \Omega} \pi(y).$  This inequality follows from \cite[Equation (12.11), (12.13)]{LPW08}.

We note that Lemma \ref{lem::sep_tv} implies that the assertion of Lemma \ref{lem::dtlimlambda} stays valid if we replace $d(t)^{1/t}$ by the separation distance $s(t)^{1/t}$.
\section{Relaxation time bounds}\label{sec::trel}

\subsection{Proof of the lower bound of Theorem \ref{thm::main_relax}}\label{sec::trel_lower}

We prove $c_1=1/(16\log 2)$ in the lower bound of the statement of Theorem \ref{thm::main_relax}.
First note that it is enough to prove that  $\thit (\CG)$ and $|\CG|\trel (\Hh)$ are both lower bounds, hence their average is a lower bound as well. First we start showing the latter.

Let us denote the second largest eigenvalue of $Q$ by $\la_{\Hh}$ and the corresponding eigenfunction by $\psi$. It is clear that $\Ev_{\pi_H}(\psi)=0$ and we can normalize it such that $\Vv_{\pi_H}(\psi)=\Ev_{\pi_H}(\psi^2)=1$ holds.
Let us define
\[ \phi: V(\Hh \wr \CG)\to \Rb, \quad \phi((\un f,x))=\sum_{w\in \CG}\psi(f_w),\]
thus $\phi$ is actually not depending on the position of the walker, only on the configuration of the lamps.
Let $X^\diamond_t=(\un F_t, X_t)$ be the lamplighter chain with stationary initial distribution $\pid$. In the sequel we will calculate the Dirichlet form \eqref{def::dirichlet} for $\phi$ at time $t$, first conditioning on the path $X[0,t]$ of the walker:
\be\label{eq::dirichlet1} \ba \mathcal E_t [\phi] &= \frac12 \Ev_{\pid}[(\phi(X^\diamond_t)-\phi(X^\diamond_0))^2]\\
&=\frac12 \Ev_{\pid}\big(\Ev_{\pid}[(\phi(X^\diamond_t)-\phi(X^\diamond_0))^2| X[0,t]]\big) \ea \ee
We remind the reader that in each step of the lamplighter walk, the state of the lamp graph $\Hh_v$ is refreshed both at the departure and arrival site of the walker. Thus, knowing the trajectory of the walker implies that we also know $L_v(t)$, the number of steps made by the Markov chain $Z_v$ on $\Hh_v$.  Moreover, the collection of random walks $\left(Z_v\right)_{v\in \CG}$ on the lamp graphs are independent given $L_v(t)$-s.

We can calculate the conditional expectation on the right hand side of \eqref{eq::dirichlet1} by using the argument above and the fact that $\E_{\pi_H}(\psi)=0$ as follows:
\begin{align} \Ev_{\pid}\left[(\phi(X^\diamond_t)-\phi(X^\diamond_0))^2| X[0,t]\right]&=\!\!\sum_{v\in\CG}\!\! \Ev_{\pid}\!\!\left[\big(\psi(Z_v(L_v(t)) - \psi(Z_v(0))\big)^2 \big|L_v(t)\right] \label{eq::conditional_dirichlet}
\end{align}
Next, the product form of the stationary measure $\pid$ ensures that we can move to $\pi_H$ inside the sum and condition on the starting state $Z_v(0)$:
 \[ \ba \Ev_{\pid}&\!\!\left[\left(\psi\left(Z_v(L_v(t))\right) - \psi\left(Z_v(0)\right)\right)^2 \big|L_v(t)\right]\\
 = &2 \Ev_{\pi_H} \psi^2 -2 \Ev_{\pi_H}\left[\psi\left(Z_v(0)\right)\Ev_{Z_v(0)}[\psi\left(Z_v(L_v(t))\right) | Z_v(0)]\right], \ea \]
 Since $\psi$ was chosen to be the second eigenfunction for $Q$, clearly \\$\Ev_{Z_v(0)}[\psi\left(Z_v(L_v(t))\right)] = \la_H^{L_v(t)} \psi(Z_v(0))$. Using the normalization \\$\Ev_{\pi_H}[\psi^2]=1$, we arrive at
\begin{align}
 \Ev_{\pid}\left[\left(\phi(X^\diamond_t)-\phi(X^\diamond_0)\right)^2| X[0,t]\right]&= 2|G|-2 \sum_{v\in \CG} \la_{\Hh}^{L_v(t)}\label{eq::cond_dirichlet}
\end{align}
Since $\sum_{v\in \CG} L_{v}(t) = 2t$ and the function $\la_{\Hh}^x$ is convex, Jensen's inequality implies that
\[ \sum_{v\in G}\la_{\Hh}^{L_v(t)}\ge|G|\cdot\la_{\Hh}^{2t/|G|} .\]
Combining this with \eqref{eq::cond_dirichlet} and \eqref{eq::dirichlet1} and setting $t:=t^*= |\CG |\trel(H)= |G|/(1-\la_H)$ we arrive at
\[ \mathcal E_t(\phi) \le |G|\left(1-e^{2  \frac{\log\la_H}{1-\la_H}}\right)\le |G|\left(1- 2^{-4 -1}\right),\]
where in the last step we assumed $\la_{\Hh}>1/2$, since in this case we have $(1-\la_{\Hh})^{-1}\log\la_{\Hh}> -2 \log 2$.  On the other hand, if $\la_{\Hh}<1/2$, than $\trel(H)<2$ and we will use the other lower bound $t_{\text{hit}}(\CG)$ which is at least of order $|G|$.
Dividing by $\Vv_{\pid} \phi=|G|$, and using the variational characterization of the spectral gap in Lemma \ref{lem::variational}, we get that the spectral gap $\gamma_{t^*}$ at time $t^*$ satisfies
\[ \gamma_{t^*}\le 1-2^{-5}. \]
Since $\gamma_t$ is by definition the spectral gap of the chain at time $t$, we have
\be\label{eq::spectralgap} 1-\la_2(\Hh\wr\CG)^{t^*} \le 1-2^{-5}.\ee
Thus
\[ 5 \log2 \ge t^* \left(1-\la_2(\Hh\wr\CG)\right),\]
so we get a lower bound $\trel(\Hh\wr\CG)\ge \frac{1}{5 \log 2} |\CG| \trel(\Hh)$.

To get the lower bound $t_{\text{hit}}(G)/4$ we adjust the proof for $0-1$ lamps ($\Hh=\Z_2$) \cite[Theorem 19.1]{LPW08} to our setting.
First pick a vertex $w\in \CG$ which maximizes the expected hitting time $\Ev_{\pi_G}(\tau_w)$.  As before, we will use the second eigenfunction $\psi$ with eigenvalue $\la_H$ with $\Ev_{\pi_H}(\psi)=0, \Ev_{\pi_H}(\psi^2)=1$ and define
\[ \phi\left((\un f,x)\right):=\psi(f_w). \]
Easy to see with the same conditioning argument we used in \eqref{eq::conditional_dirichlet} and \eqref{eq::cond_dirichlet} that the Dirichlet form at time $t$ equals
\[ \mathcal E_t(\phi)= 1-\Ev_{\pid}\left[\la_H^{L_w(t)}\right]\]
Now we will show that $\Ev_{\pid}\left[\la_H^{L_w(t)}\right]\ge 1/4$.
To see this we first note that for any $t$ we have for the hitting time $\tau_w$ of $w\in G$
\[ \ba \E_v(\tau_w) &\le t + t_{\text{hit}} \Pv_v[\tau_w>t] \\
\E_{\pi_G}(\tau_w) &\le t + t_{\text{hit}} \Pv_{\pi_G}[\tau_w>t] \ea \]
  To see the first line: either the walk hits $w$ before time $t$, or the expected additional time it takes to arrive at $w$ is bounded by $\thit$ regardless of where it is at time $t$. The second line follows by averaging over $\pi_G$.

Next, \cite[Lemma 10.2]{LPW08} states that $\thit \le 2 \max_v \Ev_{\pi}[\tau_v]$ holds for every irreducible Markov chain. We exactly picked $w$ such that it maximizes $\Ev_{\pi_G}(\tau_v)$, so we have $t_{\text{hit}}\le 2 \E_ {\pi_G}[\tau_w]$, so multiplying the previous displayed inequality by 2 gives
\[ t_{\text{hit}} \le 2t + 2 t_{\text{hit}} \Pv_{\pi_G}[\tau_w>t] \]
Now substituting $t=\thit/4$ and rearranging terms results in
\[  \Pv_{\pi_G}\left[\tau_w>\frac{\thit}{4}\right] \ge \frac14. \]
Since $\{ L_w(\thit/4)=0 \}= \{ \tau_w>\thit/4 \}$, we can use this inequality to obtain the upper bound
\[ \mathcal E_{\thit/4}(\phi)= 1- \E_{\pid}\left[\la_{\Hh}^{L_w(\thit/4)}\right] \le 1-\Pv_{\pi_G}[\tau_w>\thit/4 ] \le 1- \frac14 = \frac34. \]
Analogous to the last lines of the proof of the lower bound above, (see \eqref{eq::spectralgap}) we obtain the other desired lower bound:
\[ \trel(\Hh) \ge  \frac{1}{2\log 2} \frac14 \thit(\CG) .\]
Putting together the two bounds we get
\[ \ba\trel(\Hh\wr\CG) &\ge \max\left\{\frac{1}{8\log 2} \thit(\CG), \frac{1}{5\log2}|\CG|\trel(\Hh)\right\} \\
&\ge \frac{1}{16\log 2} \left(\thit(\CG)+ |\CG|\trel(\Hh)\right).\ea \]

\subsection{Proof of the upper bound of Theorem \eqref{thm::main_relax}}\label{sec::trel_upper}
To prove the upper bound, we will estimate the tail behavior of the strong stationary time $\tau^\diamond_2$ in Lemma \ref{lem::opt_sst_lamplighter_2}, relate it to $s^\diamond(t)$, the separation distance on $H\wr G$, and then use Lemmas \ref{lem::dtlimlambda} and \ref{lem::sep_tv} to see that $s^\diamond(t)^{1/t} \to \lambda^\diamond$.  We will use separation-optimal $\tau_H$ and $\tau_G$ in the construction of $\tau_2^\diamond$.  The existence is guaranteed by Theorem \ref{thm::AD}. We will use $\Pv$ for $\Pv_{(\un f, x)}$ for notational convenience.
Combining \eqref{eq::sep_ineq} and the fact that $\tau^\diamond$ happens when all the stopping times $\tau_{\Hh}(v), v\in \CG$ have happened on the lamp graphs, by union bound we have for any choice of $0<\al<1$
\begin{align} s_{(\un f, x)}^{\diamond}(t)&\le \Pv_{(\un f,x)}\left[\tau^\diamond_2>t\right] \le \Pv_{(\un f,x)}\left[\tau^\diamond>\al t\right]+ \Pv_{(\un f,x)}\left[\tau_2^\diamond>t| \tau^\diamond < \al t\right]\notag \\
&\le \ \Pv[\tau_{\text{cov}}>\al t/3]\label{eq::cov_not}\\
&\ + \Pv\left[ \exists w\in \CG:   L_w(\al t)< \tfrac{\al t}{2|\CG|} \big| \tau_{\text{cov}} \le \al t/3\right]\label{eq::localtime_not}\\
&\ + \Pv\left[\exists w\in \CG: \tau_{\Hh}(w)> L_w(\al t)\ \big|  \forall v\in \CG: L_v(\al t)\ge\frac{\al t}{2|\CG|} \right] \label{eq::tauh_not}\\
&\ + \max_{(\un g, y)}\Pv_{(\un g,y)}\left[\tau_G>(1-\al)t\right] \label{eq::fourth_term}
\end{align}
Namely, there are four possibilities:  The first option is that there is a state $w\in \CG$ which is not hit yet, i.e. the cover time of the chain is greater than $\al t/3$: giving the term \eqref{eq::cov_not}. The constant $1/3$ could have been chosen differently, we picked $\al t/3$ such that the remaining $2\al t/3$ time still should be enough to gain large enough local time on the vertices $v\in G$. Secondly, even though any state $w$ on the graph $\CG$ is reached before time $\al t/3$, the remaining time was not enough to have at least $\al t/2|G|$ many moves on some lamp graph $H(w)$, term \eqref{eq::localtime_not}. The third option is that even though there have been many moves on all the lamp graphs, there is a vertex $w\in G$ where $\tau_{\Hh}(w)$ has not happened yet, yielding the term \eqref{eq::tauh_not}.
We will handle the three terms separately. The fourth term handles the case where the strong stationary time $\tau_G$ is too large. (For convenience, we will write $t$ instead
of $\alpha t$ in estimating the first three formulas.)

We can estimate the first term \eqref{eq::cov_not} by a union bound:
\be\label{eq::not_cov_estimate} \Pv[\tau_{\text{cov}}>t/3] \le \Pv[\exists w: \tau_w>t/3] \le |\CG| 2 e^{-\tfrac{\log 2}{6}\tfrac{t}{\thit} },\ee
where $\thit$ is the maximal hitting time of the graph $\CG$, see \eqref{eqn::thit_definition}.
To see this, use Markov's inequality on the hitting time of $w\in \CG$ to obtain that for all starting states $v\in \CG$ we have
$\Pv_v[ \tau_w> 2\thit ] \le 1/2 $,  and then run the chain in blocks of $2\thit$. In each block we hit $w$ with probability at least $1/2$, so we have
\[ \Pv_v[ \tau_w> K (2\thit) ] \le \frac{1}{2^K}.\]
To get it for general $t$,  we can move from $\fl{ t/\thit}$ to $t/\thit$ by adding an extra factor of $2$, and \eqref{eq::not_cov_estimate} immediately follows by a union bound.

For the third term \eqref{eq::tauh_not} we claim the following upper bound holds:
\be\label{eq::not_tauh_estimate}  \Pv\left[\exists w: \tau_{\Hh}(w)\ge L_w(t)\big| \forall v:L_v(t)>\tfrac{t}{2|G|} \right] \le |G| \frac{1}{\pi_{\min}(\Hh)} e^{-\tfrac{t}{2|G| \trel(H)}}.\ee
To see this we estimate the probability of the event $\{\tau_{\Hh}(w)\ge L_w(t)\ \big|  L_w(t)\ge\frac{t}{2|\CG|} \}$ on a single lamp graph and then use a union bound to lose a factor $|\CG|$ and arrive at the right hand side.
First note that according to Lemma \ref{lem::dtlimlambda}, the tail of the strong stationary time $\tau_{\Hh}$ is driven by $\la_{\Hh}^t$. More precisely, using the inequality \eqref{eq::la_lim_upper} we have that for any initial state $h\in \Hh$:
\[ \ba \Pv_h\left[ \tau_H(w) \ge \frac{t}{2|\CG|} \right] &\le s_{\Hh}\left(\tfrac{t}{2|\CG|}\right) \le  \frac{1}{\pi_{\min}(\Hh)} \la_{\Hh}^{t/2|\CG|} \\
&\le \frac{1}{\pi_{\min}(\Hh)} \exp\left\{-\frac{(1-\la_{\Hh}) t}{2|\CG|}\right\}.  \ea \]
Since we have made at least $L_w(t)\ge \tfrac{t}{2|\CG|}$ steps on each coordinate, the claim \eqref{eq::not_tauh_estimate} follows. The fourth term \eqref{eq::fourth_term} can be handled analogously and yields an error probability $\exp\{ - c t/\trel(G) \} $ which then, taking the power of $1/t$ and limit as in Lemma \ref{lem::dtlimlambda}, will lead to a term of order $\trel(G)$. Then, taking into account that  $\trel(G)\le c \tmix(G)\le C \thit(G)$ holds for any lazy reversible chain (see e.g. \cite[Chapter 11.6,12.4]{LPW08}), we can ignore this term.

  The intuition behind the estimates below for the second term  \eqref{eq::localtime_not} is that since the total time was at least $2t/3$ after hitting,  regularity of $\CG$ implies that the average number of moves on a lamp graph equals $4t/(3|G|)$ by the double refreshment at any visit to the vertex.  Thus, the probability of having less than $t/(2|G|)$ moves must be small.

More precisely, we introduce the excursion-lengths to a vertex $w\in \CG$:
Let us define for all $w\in \CG$ the first return time to state $w$ as
\[ R(w)=\inf\{ t>0: X_t=w|X_0=w\}.\]
The strong Markov property implies that the length of the $i$-th excursion $R_i(w)$, defined as the time spent between the $(i-1)$th and $i$th visit to $w$, are i.i.d random variables distributed as the first return time $R(w)$.

Thus, having not enough local time on some site $w\in \CG$ can be expressed in terms of the excursion lengths $R_i(w)$-s as follows:
\be\label{eq::move_to_excursions}  \Pv\left[ \exists w:   L_w(t)\le \tfrac{t}{2|\CG|} \big| \tau_{\text{cov}} \le \frac{t}{3} \right]\le |\CG| \max_{w\in G}\Pv_w\left[ \sum_{i=1}^{t/4|\CG|} R_i(w)\ge \frac{2t}{3} \right] , \ee
since conditioning on hitting before $t/3$ ensures that we had at least $2t/3$ steps to gain the $t/4|G|$ visits to $w$, and by the definition \eqref{def::localtime} of $L_v(t)$, this guarantees that $L_w(t)<t/2|G|$.

  We aim to estimate the right hand side of \eqref{eq::move_to_excursions} using the moment generating function of the first return time $R(w)$. To be able to carry out the estimates we need a bound on the tail behavior of the return times. A very similar argument can be used to the one we used for the tail of the cover time \eqref{eq::not_cov_estimate}, namely the following holds:
\[ \ba \Pv_w\left[R(w)>2\thit+1\right] &= \Pv_w\left[X_1\neq w\right]\Ev\left[ \Pv_{X_1} (\tau_w>2\thit|X_1) \right]\\
&\le \frac{\Ev[\Ev_{X_1}(\tau_w) ] }{  2\thit}\le \frac12. \ea  \]
Running the chains in blocks of $2\thit+1$, one can see that in each block the chain has a chance at least $1/2$ to return to $w$, so we have for each $t>2\thit+1$
\be \label{eq::excursion_decay_rate} \Pv\left[ R_w> t \right] \le 2 \left(\frac{1}{2}\right)^{ \tfrac{t}{2\thit+1}}= 3 \exp\left\{ -\frac{\log2}{2} \frac{t}{\thit}\right\}, \ee
where the factor $3$ comes from ignoring to take the integer part of $t/\thit$ and neglecting the $+1$ term in the denominator.

We can use this tail behavior to estimate the moment generating function
\[\ba \Ev\left[ e^{\beta R_w} \right] &\le e^{2 \beta |\CG|} + \Ev[ e^{\beta R_w}\ind\{R_w>2|\CG|\}] \\
&= e^{2\beta |\CG|} + \int_{e^{2\beta|\CG|}}^\infty \Pv[e^{\beta R_w}>z]\mathrm dz \ea \]
where we cut the expectation at $2|G|$. Using the bounds in \eqref{eq::excursion_decay_rate} yields:
\[ \ba
\Ev\left[ e^{\beta R_w} \right]&\le e^{2\beta | \CG|}+ \int_{e^{2\beta|\CG|}}^\infty \Pv[ R_w>\frac{1}{\beta}\log z]  \mathrm dz\\
&\le e^{2\beta |\CG|} +  2 \int_{e^{2\beta|\CG|}}^\infty z^{-\tfrac{\log 2}{2 \beta \thit}}  \mathrm dz.\ea \]
 Setting arbitrary $\beta< \log 2/ (2\thit)$ makes the second term integrable, and with the special choice of $\beta=\frac{\log2}{4\thit}$ we obtain the following estimate:
\be\label{eq::exp_moment_recursion_est}
\Ev\left[ e^{\beta R_w} \right] \le e^{ 2 \beta |\CG|}  + 2 e^{-2 \beta |\CG|} \le e^{(2+\delta) \beta |\CG|}
\ee
with an appropriately chosen $0<\delta<1/3$.
Now we apply Markov's inequality to the function $e^{\beta \sum_{i=1}^{t/4|G|} R_i(w)}$  to estimate the right hand side of \eqref{eq::move_to_excursions}:
 \be  \Pv_w\left[ \sum_{i=1}^{t/4|G|} R_i(w)\ge 2t/3 \right] \le e^{-\tfrac23 \beta t}\cdot \Ev\left[ e^{\beta R_w}\right] ^{\tfrac{t}{4|G|}},\ee
where we also used the independence of the excursions $R_i(w)$-s.  Using the estimate in \eqref{eq::exp_moment_recursion_est} to bound the right hand side we gain that
\be \ba \Pv_w\left[ \sum_{i=1}^{t/4|\CG|} R_i(w)\ge 2t/3 \right] &\le  e^{-\tfrac23 \beta t}\cdot e^{(2+\delta) \beta |G| \cdot \tfrac{t}{4| \CG|} } \\
&\le e^{-\tfrac{1-\tilde \delta }{6}\beta t}=  \exp\left \{-\frac{(1-\tilde \delta)\log2 }{24} \frac{t}{\thit} \right\},
\ea\ee
where we used $\beta=\log2/(4\thit)$, and modified $\tilde\delta:= 3\delta/2\le 1/2.$
Using the relation of the local time to the excursion lengths in \eqref{eq::move_to_excursions} we finally get that the second term \eqref{eq::localtime_not} is bounded from above by
\be\label{eq::localtime_estimate2} \Pv\left[ \exists w:   L_w(t)\le \frac{t}{2|\CG|} \big| \tcov \le t/3\right] \le |\CG| \exp\left\{-\frac{\log2 }{48} \frac{t}{\thit}  \right\}. \ee
Mind that all the estimates \eqref{eq::not_cov_estimate}, \eqref{eq::not_tauh_estimate} and  \eqref{eq::localtime_estimate2} were independent of the initial state $(\un f, x) \in H\wr G$, so using the second inequality in \eqref{eq::la_lim_upper} and maximizing over all possible initial states yields us
\be \ba |\la_2|^t\le 2d^\diamond(t) &\le 2 s^\diamond(t) \le  \frac{4|\CG|}{\pi_{\min}(H)} \exp\left\{-\frac{t}{2|\CG| \trel(\Hh)}\right\} \\
&+4  \exp\left \{-\frac{(1-\tilde \delta)\log2 }{24} \frac{t}{\thit} \right\} + 4 |\CG| \exp\left\{-\frac{\log2}{6}\frac{t}{\thit} \right\}
\ea\ee\label{eq::la_estimate}
In the final step we apply Lemma \ref{lem::dtlimlambda}: we  take the power $1/t$  and limit as $t$ tends to infinity with fixed graph sizes $|\CG|$ and $|\Hh|$ on the right hand side of \eqref{eq::la_estimate} to get an upper bound on $\la_2$.  Then we use that $(1-e^{-x})\le x +o(x)$ for small $x$ and obtain the bound on $\trel$ finally:
\[ \trel(H\wr G) \le \max \left \{ 2 |\CG| \trel(\Hh), \frac{48}{\log 2 } \thit \right\}.\]
This finishes the proof of the upper bound on the relaxation time.

\section{Mixing time bounds}\label{sec::tmix}
  Based on the fact that $H$ has a separation-optimal strong stationary time $\tau_H$, the idea of the proofs is to relate the separation distance to the tail behavior of the stopping times $\tau^\diamond$ and $\tau_2^\diamond$ constructed in Lemmas \ref{lem::opt_sst_lamplighter} and \ref{lem::opt_sst_lamplighter_2}, respectively. Then these estimates are turned into bounds of the total variation distance using the relations in Lemma \ref{lem::sep_tv}. This method gives us the upper bound in \eqref{eqn::tmix_main} and the corresponding lower bound under the assumption {\bf (A)}. For the lower bound without the assumption, we will need slightly different methods.
\subsection{Proof of the upper bound of Theorem \ref{thm::main_mixing}}
The idea of the proof is to use appropriate top quantiles of the strong stationary time $\tau_{\Hh}$ on $\Hh$, and give an upper bound on the tail of the strong stationary time $\tau^\diamond_2$ defined in Lemma \ref{lem::opt_sst_lamplighter_2}. Throughout, we (only) need that $\tau_H$ and $\tau_G$ in the construction of $\tau_2^\diamond$ are separation-optimal. The existence is guaranteed by Theorem \ref{thm::AD}. (Thus, $\tau_H$ does not necessarily possess halting states.)

Let us denote the worst-case initial state top $\ve$-quantile of a stopping time $\tau$ as \be\label{eq::quantile_def}\tqu{\ve}(\tau):= \max_{y\in \Omega} \inf \{t: \Pv_y[\tau >t] \le \ve \}\ee
We continue with the definition of the blanket time:
\be\label{eq::blanket_def} \mathcal B_2:= \inf_t\left\{\forall v,w\in G: \frac{L_w(t)}{L_v(t)}\le 2 \right\}.\ee
Let us further denote
 \be\label{eq::expected_blanket_def} \B_2:= \max_{v\in G} \Ev_v(\mathcal B_2) \ee
 It is known from \cite{DLP11} that there exist universal constants $C$ and $C'$ such that $C'\tcov \le  \B_2 \le C \tcov$.

Thus, our first goal is to show that at time \[ 8 \B_2 + |\CG| \tqu{1/16|\CG|}(\tau_{\Hh}) + \tqu{1/16}(\tau_{\CG})=:8 \B_2 + |G|t_H^u + t_G =: \td \] we have for any starting state $(\un f, x)$ that
\be\label{eq::tautail} \Pv_{(\un f,x)}[\tau^\diamond_2>\td]\le \frac14. \ee
We remind the reader that $\tau_2^\diamond=\tau^\diamond+\tau_G(X_{\tau^\diamond})$ and thus the following union bound holds:
\be\label{eq::upper_cond2}\ba  \Pv\left[\tau^\diamond_2>\td\right]& \le \Pv\left[\mathcal B_2> 8\B_2 \right] + \Pv\left[ \tau^\diamond > |G|t_H^u+ 8 \B_2 | \mathcal B_2\le 8 \B_2  \right] \\
& \quad +  \max_{v\in G}\Pv_v\left[ \tau_G > t_G | \mathcal B_2 \le 8 \B_2, \tau^\diamond< 8 \B_2 + |G| t_H^u \right],\ea \ee
where in the third term we mean that we restart the chain after time $8\B_2+ |G| t_H^u$, and measure $\tau_G$ starting from there. The first term on the right hand side is less than $1/8$ by Markov's inequality, the third is less than $1/16$ by the definition of the worst case quantile. The second term can be handled by conditioning on the local time sequence of vertices and on the blanket time: (for shorter notation we introduce $t_1:=|G|t_H^u+ 8\B_2$)
\be\label{eq::upper_cond} \ba &\Pv\left[ \tau^\diamond >|G|t_H^u+ 8\B_2| \mathcal B_2\le 8 \B_2 \right] \\
 &=\!\!\!\!\!\! \sum_{s\le 8\B_2, \left(L_v(t_1)\right)_v}\!\!\!\!\!\!\!\! \Pv\left[\exists w\!: \{\tau_{\Hh}(w) > L_w(t_1)\} \big| \left(L_v(t_1)\right)_v, \mathcal B_2=s\right]\!\cdot\! \Pv\left[ \left(L_v(t_1)\right)_v, \mathcal B_2=s\right] \ea\ee
 The fact that $\mathcal B_2 \le 8 \B_2$ means that the number of visits to every vertex $v\in G$ must be greater than half of the average, which is at least $\tfrac{1}{2} t_{\Hh}^u$. Since $L_w(t)$ is twice the number of visits by \eqref{def::localtime},   $\{\tau_{\Hh}(w) >L_w(t_1)\} \subseteq \{\tau_{\Hh}(w) >t_{\Hh}^u\}$. By the definition of the quantiles,
 \[ \Pv_h\left[\tau_{\Hh}(w)>t_{\Hh}^u\right]\le \frac{1}{16 |\CG|} \]
holds for every $h\in \Hh$ and $w\in \CG$.  Applying a simple union bound on the conditional probability on the right hand side of \eqref{eq::upper_cond} yields
\[ \ba \Pv_{(\un f, x)}\left[\tau^\diamond>t_1 | \mathcal B_2\le 8 \B_2  \right] &\le
 \sum_{s\le 8\B, \left(L_v(t_1)\right)_v}\!\!\!\!\! \left(|\CG| \frac{1}{16|\CG|} \right) \Pv\left[ \left(L_v(t_1)\right)_v, \mathcal B_2=s\right]\\
&\le \frac{1}{16}, \ea \] where we used that the sum of the probabilities on the right hand side is at most 1.
Combining these estimates with \eqref{eq::upper_cond2} yields \eqref{eq::tautail}.
It remains to relate the worst-case quantiles to the total variation mixing times. Here we will make use of the separation-optimal property of $\tau_{\Hh}$ and $\tau_{\CG}$.
Now just consider the walk on $G$. Let us start the walker on $\CG$ from an initial state $x_0 \in \CG$ for which the maximum is attained in the definition \eqref{eq::quantile_def} of the quantile $\tqu{1/16}(\tau_{\CG})$. Then, by \eqref{eq::sepletv} we have that one step before the quantile we have
\[  \ba \frac{1}{16} \le  \Pv_{x_0}\left[\tau_{\CG}> \tqu{1/16}(\tau_{\CG})-1\right] &= s_{x_0}\left(\tqu{1/16}(\tau_{\CG})-1\right)\\
&\le 4 d\left(\frac12 (\tqu{1/16}(\tau_{\CG})-1
)\right). \ea\]
This immediately implies that $ \tfrac{1}{2}(\tqu{1/16}(\tau_{\CG})-1)\le \tmix\left(\CG, \tfrac{1}{64}\right).$ By the submultiplicative property of the total variation distance $d(k t)\le 2^k d(t)^k$ we have that $\tmix(\CG, \tfrac{1}{64})\le 6 \tmix\left(\CG, \tfrac{1}{4}\right) $. So we arrive at
\be \label{eq::tvmix_upper_G} \tqu{1/16}(\tau_{\CG})-1 \le 12 \tmix\left(\CG\right) \ee
Similarly, starting all the lamps from the position $h_0$ where the maximum is attained in the definition of $t_H^u=\tqu{1/16|\CG|}(\tau_{\Hh})$, one step before the quantile we have
\[  \frac{1}{16|\CG|}\le \Pv_{h_0}\left[\tau_{\Hh}> t_{\Hh}^u-1 \right] = s_{h_0}\left(t_{\Hh}^u-1\right)\le 4 d\left((t_{\Hh}^u-1)/2\right) \]
So we have
\be \label{eq::tvmix_upper_H}  \frac12 (\tqu{1/16|\CG|}(\tau_{\Hh})-1)\le \tmix\left(\Hh, \tfrac{1}{64|\CG|}\right).\ee

On the other hand, on the whole lamplighter chain $\Hh \wr \CG$ we need the other direction: For every starting state $(\un f, x)$ \eqref{eq::tvlesep} and \eqref{eq::tautail} implies that
\[ d_{(\un f, x)}(\td)\le s_{(\un f, x)}(\td) \le \Pv_{(\un f,x)}\left[\tau^\diamond_2 > \td\right] \le 1/4 \]
Maximizing over all states $(\un f, x)$ yields
\be \label{eq::tvmix_upper_whole} \tmix( \Hh \wr \CG ) \le \td. \ee
Putting the estimates in \eqref{eq::tvmix_upper_G} and \eqref{eq::tvmix_upper_H} to \eqref{eq::tvmix_upper_whole},  we get that
 \[ \tmix(\Hh\wr \CG) \le \td \le 8 \B_2(\CG) + 12 \tmix(\CG)+1 + 2|\CG| \left(\tmix\left(H, \frac{1}{64 |G|}\right )+\frac12\right). \]
Since $\B_2(\CG)\le C \tcov(\CG)$, and $\tmix(\CG)\le 2 \thit(\G) \le 2 \tcov(\CG)$ for any $\CG$ (see for instance \cite{LPW08}), the assertion of Theorem \ref{thm::main_mixing} follows with $C_2=8 (C+3)$, where $C$ is the universal constant relating the blanket time $\B_2$ to the cover time  $\tcov$ in \cite{DLP11}.

We remark why we did not make the constant $C_2$ explicit:  If the blanket time $\B_2$ were not used in our estimates, the error probability that some vertex $w\in \CG$ does not have enough local time would need to be added. This, similarly to the term \eqref{eq::localtime_not} behaves like $|\CG| e^{- c \left(\tcov + |\CG|\tmix(\Hh, \frac{1}{\CG}) \right)/\thit }$. If we do not assume anything about the relation of $\thit(G)$ and $\tcov(G)$ and on $\tmix(H, \frac{1}{\CG})$, then this error term will not necessarily be small. For example, if $\CG_n$ is a cycle of length $n$, $\Hh_n$ is a sequence of expander graphs, then $\tcov(G_n) = \thit(G_n) = \Theta( n^2 ) $, and  $\tmix(H, \frac{1}{\CG})=\log |\Hh| \cdot \log |\CG|  = \log |\Hh| \log n $, and we see that the term is not small if $\log|\Hh| = o(n/\log n ).$

\subsection{Proof of the lower bounds of Theorem \ref{thm::main_mixing}}\label{sec::tmix_lower}

As we did with the relaxation time, it is enough to prove that all the bounds are lower bounds separately, then take an average. First we start showing that the upper bound is sharp in \ref{eqn::tmix_main} under the assumption that there is a strong stationary time $\tau_H$ with halting states.

\subsubsection{Lower bound under Assumption (A)}
We first aim to show that\\ $c\ |G| \tmix(H, \tfrac{1}{|G|}) \le \tmix(H\wr G)$.

Consider the stopping time $\tau^\diamond$ constructed in Lemma \ref{lem::opt_sst_lamplighter}.
Corollary \ref{cor::sep_lower_bound} tells us that the tail of $\tau^\diamond$ lower bounds the separation distance at time $t$. We again emphasize that this bound holds only if $\tau_H$ in the construction of $\tau^\diamond$  is not only separation optimal but it also \emph{has a halting state}. Our first goal is to lower bound the tail of $\tau^\diamond$, then relate it to the total variation distance.

First set
\be \label{eq::td_tH_def_lower}
t_{\Hh}^l:= \tqu{|G|^{-1/2}/2} (\tau_H)-1, \quad
\td:=\frac{1}{4} |G|  t_{\Hh}^\ell, \ee
clearly this time is nontrivial if $\tqu{|G|^{-1/2}/2} (\tau_H)\neq 1$. We handle the case if it equals $1$ later.
We can estimate the upper tail of $\tau^\diamond$ by conditioning on the number of moves on the lamp graphs $H_v, v\in \CG$:
\be \label{eq::uppertail}\ba \Pv\left[\tau^\diamond>\td \right] &\ge \Pv\left[ \exists w\in \CG: \tau_{\Hh}(w)>L_w(\td) \right]\\
&\ge \sum_{(L_v(\td))_v} \Pv\left[ \exists w\in \CG: \tau_{\Hh}(w)>L_w(\td) \big| (L_v(\td))_v\right] \Pv\left[(L_v(\td))_v\right]
\ea \ee
 For each sequence $(L_v(\td))_{v\in \CG}$ we define the random set
\[ S_{(L_v)_v}:= \left\{ w\in \CG: L_w(\td) \le t_{\Hh}^\ell\right\}\]
Since $\sum_v L_v(\td) = 2\td= \tfrac12|\CG|  t_{\Hh}^\ell$, we have that for arbitrary local time configuration $(L_v(\td))_v$,
\be\label{eq::size_small_localt} |S_{(L_v)_v}| \ge |\CG|/2. \ee
Thus we can lower bound \eqref{eq::uppertail} by restricting the event only to those $w\in G$ coordinates which belong to this set, i.e. whose local time is small:
\be \label{eq::upper2} \ba \Pv\left[\tau^\diamond>\td \right] &\ge \!\! \!\sum_{(L_v(\td))_v}\! \!\! \Pv\left[ \exists w\in S_{(L_v)_v}: \tau_H(w)>L_w(\td) \big| (L_v\left(\td)\right)_v\right] \Pv\left[\left(L_v(\td)\right)_v\right] \\
\ge & \!\! \sum_{\left(L_v(\td)\right)_v}\!\! \Pv\left[ \exists w\in S_{(L_v)_v}: \tau_H(w)>t_{\Hh}^\ell\big| \left(L_v(\td)\right)_v\right] \Pv\left[\left(L_v(\td)\right)_v\right],
 \ea\ee
 where in the second line we used that for $w\in S_{(L_v)_v}$ we have $\{\tau_H(w)>L_w(\td) \} \supseteq \{ \tau_H(w) > t_{\Hh}^\ell\}$. Conditioned on the sequence $(L_v(\td))_v$, the times $\tau_H(w)$ for $w\in S_{(L_v)_v}$ are independent. On each lamp graph $\Hh(v)$ let us pick the starting state to be $h_0\in \Hh$ where the maximum is attained in the definition of $\tqu{|\CG|^{-1/2}/2}(\tau_H)$. Since $t_H$ is one step \emph{before} the quantile, we have
 \be \label{eq::quantile_eq}\Pv_{h_0}\left[\tau_H(w) > \tqu{|\CG|^{-1/2}} (\tau_H)-1\right] \ge |\CG|^{-1/2}/2. \ee
 We need to start the lamp-chains from the worst-case scenario $h_0\in \Hh$ for two reasons: First, we needed to define the quantile as in \eqref{eq::quantile_def} to be able to relate it to the total variation mixing time on $\Hh$, see below. Then, the fact that $\tqu{\ve}$ was defined as the worst-case starting state quantile means that for other starting states the quantile may be smaller, and the lower bound can possibly fail.

 Combining \eqref{eq::quantile_eq} with \eqref{eq::size_small_localt} and the conditional independence gives us the following stochastic domination from below to the event in \eqref{eq::upper2}
\[ \Pv\left[ \exists w\in S_{(L_v)_v}: \tau_H(w)>t_{\Hh}^\ell \big| (L_w(\td))_w\right] \ge \Pv[V> 0],\]
where $V$ is a Binomial random variable with parameters $\left(|\CG|/2, |\CG|^{-1/2}/2
\right)$. Clearly, for $|\CG| > 8 > 16 (\log 2) ^2 $ we have
\[ \Pv\left[V>0\right]= 1-\left(1-\frac{1}{2|G|^{1/2}}\right)^{|G|/2} \ge 1- e^{-\tfrac{|G|^{1/2}}{4}}. \]
Combining this with  \eqref{eq::upper2} and summing over all possible $(L_v(\td))_{v\in G}$ sequences  we easily get that
\[  \Pv\left[\tau^\diamond>\td \right] \ge 1- e^{-\tfrac{|G|^{1/2}}{4}}.\]
Then, by Corollary \ref{cor::sep_lower_bound} we have
\[ s_{(\un h_0,x)}^\diamond(\td) \ge 1- e^{-\tfrac{|G|^{1/2}}{4}}.\]
In the next few steps we relate the tail of $\tau^\diamond$ and $\tau_H$ to the mixing time of the graphs. First, combining the previous inequality with \eqref{eq::sepletv} implies that for the starting state $(\un h_0,x)$ the following inequalities hold:
\[ 1- e^{-|\CG|^{1/2}/4}\le s_{(\un h_0,x)}^\diamond(\td)\le 4d^\diamond(t^\diamond/2).\]
These immediately imply
\be \label{eq::tmix_lower_td}\tmix(\Hh\wr \CG, \frac18 )\ge \frac12\td = \frac18 |\CG| t_{\Hh}^\ell\ee
Now we will relate $t_{\Hh}^\ell=\tqu{|\CG|^{-1/2}/2} (\tau_{\Hh})-1$ to the mixing time on $\Hh$.
Since $t_{H}$ investigates the worst case initial-state scenario, by inequality \eqref{eq::sep_ineq} for any starting state $h\in \Hh$ we have
\[ s_{h} (t_H+1)\le \Pv_{h}\left[\tau_{\Hh}\ge t_H+1\right] \le |\CG|^{-1/2}/2\]
Using $d_h(t)\le s_h(t)$ (see Lemma \ref{lem::sep_tv}) and maximizing over all $h\in \Hh$
we get that \be\label{eq::dt_H}d_H(t_H+1)\le |\CG|^{-1/2}/2.\ee
On the other hand, the total variation distance for any Markov chain has the following sub-multiplicative property for any integer $k$, see \cite[Section 4.5]{LPW08}:
\be\label{eq::tv_submultiplicative} d(k t)\le 2^k d(t)^k.\ee
 Taking $t= t_H+1$ and combining with \eqref{eq::dt_H} we have that
\[ d_H(2 (t_H+1)) \le 4 d_H(t_H+1)^2 \le 4 \frac{1}{4|\CG|}, \] which immediately implies
\[   \tmix(\Hh,  1/|\CG|)\le 2 (t_H+1) .\]
Combining this with \eqref{eq::tmix_lower_td} yields the desired lower bound:
\[  \frac{1}{16} |\CG| \left(\tmix\left(\Hh,  \tfrac{1}{|\CG|}\right)-2\right)\le \tmix(\Hh\wr \CG, \frac{1}{8}).  \]
Mind that the term $-2$ in the brackets can be dropped when picking a possibly smaller constant and take the graph large enough.
The case when $\tqu{|G|^{-1/2}/2} (\tau_H)=1$ can be handled the following way: first mind that we can exchange the quantile for arbitrary $0<\alpha<1$, and look at the proof with $\tqu{|G|^{-\alpha}/2} (\tau_H)$. If this is still $=1$ for all $\alpha$, that means that $\tau_H =1$ a.s. In this case, it is enough to hit the vertices to mix immediately and thus the mixing time $|\CG| \tmix(H)$ is of smaller order than the cover time $\tcov(\CG)$.
The case when $|\CG| \le 8$ but $|\Hh| \to \infty $ is easy to see since in this case $\tmix(H, \tfrac{ 1}{|\CG|}) \le 2 \tmix(H)$ and one can argue that mixing on $\Hh\wr\CG$ requires mixing on a single lamp graph $\Hh_w$ for a fixed $w\in \CG$. Thus the lower bound remains valid.

The cover time of $G$
is already a lower bound for the $0-1$ lamps case by \cite{PR04}, hence also for general lamps,
but, for completeness, we adjust the proof  in \cite[Theorem 19.2]{LPW08} to our setting. By Lemma \ref{lem::opt_sst_lamplighter} we can estimate the separation distance on $\Hh\wr\CG$ as
\be \ba \label{eq::stgecov}  s^\diamond_{(\un f, x)}(t)&\ge \Pv_{(\un f, x)}\left[\tau^\diamond > t\right] \\
&\ge \Pv_{(\un f, x)}\left[\exists w \in \CG: \tau_H(w)> L_w(t) \right]\\
& \ge \Pv_{(\un f, x)}\left[\exists w\in \CG: L_w(t)=0 \right] = \Pv_{(\un f, x)}\left[\tau_{\text{cov}}>t\right].  \ea\ee
Now, using the submultiplicativity of $d(t)$ in \eqref{eq::tv_submultiplicative} and the relation of the separation distance and the total variation distance in \eqref{eq::sepletv}, we have that at time $8\tmix(H\wr G, 1/4)$:
\[  s^\diamond_{(\un f, x)} \left(8\tmix(H\wr G, \tfrac14)\right) \le 4 d^\diamond \left(4\tmix(H\wr G, \tfrac14)\right) \le 4 \frac{2^4}{4^4}\le \frac14 \]
Combining with \eqref{eq::stgecov} yields that for every starting state we have
\[ \Pv_{(\un f, x)}\left[\tau_{\text{cov}} > 8\tmix(H\wr G, 1/4) \right] \le 1/4. \]
Thus, run the chain in blocks of $8\tmix(H\wr G, 1/4)$ and conclude that in each block it covers with probability at least $3/4$. Thus, the cover time is dominated by $8\tmix(H\wr G, 1/4)$ times a geometric random variable with success probability $3/4$, so we have
\[ \Ev_{(\un f, x)}\left[\tau_\text{cov}\right] \le 11 \tmix(\Hh\wr \CG, 1/4)  . \]
Maximizing the left hand side over all possible starting states yields $\tcov(\CG) \le 11 \tmix(\Hh\wr \CG, 1/4)$, finishing the proof.
\subsubsection{Proof of the lower bound of Theorem \ref{thm::main_mixing}, without assumption {\bf(A)}}
Now we turn to the general case and first show that $c\  \trel(H) |G| \log |G|$ is a lower bound. No laziness assumption on the chain on $H$ is needed to get this bound. We will use a distinguishing function method.
Namely, take an eigenfunction $\phi_2$ of the transition matrix  $Q$ on $H$ corresponding to the second eigenvalue $\la_H$.
Then let us define $\psi: H \wr G \to \mathbb C$:
\be \psi((\un f, x)):= \sum_{v\in G} \phi_{2} (f_v). \ee\label{eq::psi_def}
One can always normalize such that
\[ \Ev_{\pid}(\psi)= \sum_{v \in G} \Ev_\pi[\phi_{2}]=0 \quad \Vv_{\pid}(\psi)=\sum_{v \in G}\Vv_\pi(\phi_{2}) = |G|\cdot 1\]
This normalization has two useful consequences: First, by Chebyshev's inequality, the set $A=\{\psi<2 |G|^{1/2}\}$ has measure at least $3/4$ under stationarity. Second, $\phi_2(g_0):=\max_{g\in H} \phi_2(g) > 1$, otherwise the variance would be less than $1$. We aim to show that the set $A$ has measure less then $1/2$ at time $c \trel(H) |G|\log |G|$ and then we are done by using the following characterization of the total variation distance, see \cite{AF02, LPW08}:
\[ \| \nu-\mu \|_{\TV}= \sup_{A\subset \Omega} \{ \nu(A) - \mu(A)\}. \]

Let us start all the lamp graphs from $g_0\in H$ where the maximum is attained for $\phi_2$. Then we can condition on the local time sequence and use the eigenvalue property of $\phi_2$ to obtain
 \be \ba\label{eq::psi_est_1} \Ev_{(\un g_0, x)}\left[\psi((\un F_t, X_t))\right]& = \Ev \left[ \Ev\left[\sum_{w\in G}\phi_2(F_w(t))| \left(L_v(t)\right)_v \right] \right] \\
 & = \phi_2(g_0) \Ev_x \left[ \sum_{w\in G} \la_H^{L_w(t)} \right].
 \ea \ee
 Since $\sum_v L_v(t) = 2t$, we can apply Jensen's inequality on the function $y\mapsto\la_H^y$ to get a lower bound on the expectation:
 \be\label{eq::general_exp_lower}  \Ev_x \left[ \sum_{w\in G} \la_H^{L_w(t)}\right] \ge |G| \la_H^{\tfrac{2t}{|G|}}= |G| \left(1- \frac{1}{\trel(H)} \right)^{\tfrac{2t}{|G|}}.\ee
By giving a lower bound on the right hand side we must assume here that $\la_H>0$, or equivalently $\trel(H)>C>1$. Thus, first we handle the other case, i.e. when $\trel(H)<2$. Then the lower bound we are about to show is of order $|G| \log |G|$ which is always at most the order of $\tcov(G)$, due to a result by Feige \cite{Feige95} stating that for simple random walk on any connected graph $G$, $\tcov(G)\ge (1+o(1))|G| \log |G|$.

When $\trel(H)>2$,  we can use that $1-x > e^{-1.5 x } $ when $0<x < 0.5$ to get a lower bound on the right hand side of \eqref{eq::general_exp_lower}. Then set $t=c \trel(H) |G| \log |G|$ turning the estimate in \eqref{eq::psi_est_1} into
\[  \Ev_{(\un g_0, x)}\left[\psi((\un F_t, X_t))\right] \ge |G|^{1-3c} \phi_2(g_0).\]
We can easily upper bound the conditional variance as follows:
\[ \Vv \left[\psi_t | (L_v(t))_{v\in G}\right] \le \sum_{w\in G} \Ev_{g_0}\left[\phi_2^2(F_w(t))| L_w(t)\right] \le |G| \phi_2^2(g_0).\]
Now, let us estimate the measure of set $A$ at time $t$ by using the lower bound on the expectation:
 \[ \Pv_{(\un g_0, x)} \left[\psi_t \le 2 |G|^{1/2} \right] \le \Pv_{(\un g_0, x)} \left[|\psi_t - \Ev(\psi_t)| \ge \phi_2(g_0)|G|^{1-3c} - 2|G|^{1/2} \right] \]
 Now we use that $\phi_2(g_0)>1$ and if $c<1/6$ then on the right hand side, the term $\phi_2(g_0)|G|^{1-3c}$ dominates, so for $|G|$ large enough we can drop the negative term and compensate it with a multiplicative factor of $1/2$, say.
 Thus, condition on the local time sequence first and see that for any sequence $\left(L_v(t)\right)_{v\in G}$ Chebyshev's inequality yields:
 \[ \Pv_{(\un g_0, x)} \left[\psi_t \in A \big| \left(L_v(t)\right)_{v\in G} \right]\le  \frac{\Vv\left[\psi_t | (L_v(t))_{v\in G}\right]}{ 1/4 \phi_2^2(g_0)|G|^{2-6c}}\]
 Combining this with the estimate on the conditional variance above yields that
 \[ \Pv_{(\un g_0, x)}\left[\psi_t \in A| \left(L_v(t)\right)_v \right] \le \frac{4}{|G|^{1-6c}}. \]
 This bound is independent of the local time sequence, so the law of total probability says we have the same upper bound without conditioning on the local times.
 Now setting $c<1/6$ an $|G|$ large enough we see that the right hand side can be made smaller than $1/2$, finishing the proof.

To see that the cover time is a lower bound in the general case, couple the chain on $H\wr G$ to $\Z_2\wr G$, i.e. jump to stationary distribution on $H_v$ once the walker on the base hits vertex $v$ and use \cite{PR04} or \cite{LPW08} to see that $\tcov(G)\le \tmix(Z_2\wr G)\le \tmix(H\wr G)$.

Next we show that $c |G| \tmix(H)$ is a lower bound if the chain on $H$ is lazy.

Let us start with a definition for general Markov chain $X$ on $\Omega$
\[ \tstop(G):= \max_{x \in \Omega} \min\left\{ \Ev[\tau]; \tau \mbox{ stopping time s.t. } \Pv_x[X_\tau = y]= \pi(y)\  \forall y\in \Omega \right\}. \] We call a stopping time mean-optimal if $\Ev[\tau]=\tstop(G)$. Lov\'asz and Winkler \cite{LW95} show that optimal stopping rules always exist for irreducible Markov chains.
We aim to show that
\[ \frac{1}{2}|G| \cdot \tstop (H) \le \tstop (H\wr G ).\]
 Take a mean optimal stopping time $\tau^*$ on $H\wr G$ reaching minimal expectation, i.e. $\Ev_{(\un f^*,x^*)} [\tau^*]=\tstop(H\wr G)$ for some $(\un f^*, x^*)\in H\wr G$ and $\Ev_{(\un f,x)} [\tau^*]\le\tstop(H\wr G)$ for $(\un f, x)\neq (\un f^*, x^*)$.

 We use this $\tau^*$ to define a stopping rule $\tau_H(v)$ on $H_v$, for every $v \in G$. Namely, do the following: look at a coordinate $v\in G$ and at the chain restricted to the lamp graph $H_v$, i.e. only the moves which are done on the coordinate $H_v$. Then, stop the chain on $H_v$  when $\tau^*$ stops on the whole $H\wr G$.

 Start the chain from any $(\un f_0, x_0)$.  Since $\sum_{v\in G} L_v(t)=2 t$, we have
 \[ \sum_{v\in G} \Ev_{f_v(0)}[\tau_H(v)]= \Ev_{(\un f_0, x_0)}\left[ \sum_{v\in G} L_v(\tau^*) \right]= 2 \Ev_{(\un f_0, x_0)}[\tau^*]. \]
Take the vertex $w\in G$ (which can depend on $x_0$), which minimizes the expectation $\Ev_{f_v(0)}[\tau_H(w)]$. Clearly for this vertex the expected value must be less than the average:
 \[ \Ev_{f_0}[\tau_H] \le \frac{2}{|G|} \Ev_{(\un f_0, x_0)}[\tau^*] \]
The left hand side is at least as large as what a mean-optimal stopping rule on $H$ can achieve, and the right hand side is at most $\frac{2}{|G|}\tstop(H\wr G)$. Thus we arrive at
\[ \frac{1}{2} |G| \tstop(H) \le \tstop(H\wr G). \]

In the last step we use the equivalence from the paper \cite[Corollary 2.5]{PS12} stating that $\tstop$ and $\tmix$ are equivalent up to universal constants for lazy reversible chains and get that

\[ c_1 |G| \tmix(H) \le \tmix(H\wr G). \]


\section{Further directions}
The next step of understanding generalized lamplighters walks might be to investigate which properties on $G$ and $H$ are needed to exhibit cutoff (for a definition see \cite{AF02, LPW08}), or to determine the mixing time in the uniform metric.

For $\Z_2 \wr \CG$, already \cite{HJ97} implies a total variation cutoff with threshold $\tfrac{1}{2} \tcov(K_n)$ for $G$ being the complete graph and that there is no cutoff if $G$ is a cycle of length $n$. The results of \cite{PR04} include a proof of total variation cutoff for $\Z_2 \wr \Z_n^2$ with threshold $\tcov(\Z_n^2)$.  The results in \cite{MP11} also includes cutoff at $1/2 \tcov(G_n)$, with some uniform local transience assumptions on $G_n$. Further, Levi \cite{NL06} proved that the wreath product of two complete graphs $K_{n^{\la}}\wr K_n$, $0\le \la \le 1$ exhibits a cutoff at $(1+\la)/2 n \log n$.

For the mixing time in the uniform metric, we know \cite[Theorem 1.4]{PR04} that if $\CG$ is a regular graph such that $\thit(\CG) \leq K|\CG|$, then there exists constants $c,C$ depending only on $K$ such that
\be \label{eqn::tu_mix} c|\CG|(\trel(\CG) + \log|\CG|) \leq \tu(\Z_2\wr \CG) \leq C|\CG|(\tmix(\CG) + \log|\CG|).\ee
These bounds fail to match in general.  For example, for the hypercube $\Z_2^d$, $\trel(\Z_2^d) = \Theta(d)$
 \cite[Example 12.15]{LPW08} while $\tmix(\Z_2^d) = \Theta(d \log d)$ \cite[Theorem 18.3]{LPW08}. Then \cite{KMP12} showed that the lower bound is sharp in \eqref{eqn::tu_mix} under conditions which are satisfied by the $d(n)$ dimension tori $\CG_n= \Z_n^{d(n)}$ for arbitrary chosen $n$ and $d(n)$.

\section{Acknowledgement}
We thank Elisabetta Candellero, G\'abor Pete and Tim Kam Wong for useful comments and Louigi Addario-Berry, Perla Sousi and Peter Winkler for a useful discussion of halting states.

\bibliographystyle{plain}
\bibliography{lamplighter}

\end{document}